\documentclass[12pt]{amsart}
\usepackage[cp1251]{inputenc}
\usepackage[T2A]{fontenc}
\usepackage[english]{babel}
\usepackage{amsmath,amsfonts,amssymb}
\usepackage{geometry}
\usepackage{amsmath, amsthm, amscd, amsfonts, amssymb, graphicx, color}
\usepackage[bookmarksnumbered, colorlinks, plainpages]{hyperref}

\textheight605pt \textwidth425pt \oddsidemargin0pt \evensidemargin0pt
\topmargin-15pt \headheight15pt \headsep15pt \tolerance=4000

\numberwithin{equation}{section}
\newtheorem{theorem}{Theorem}[section]
\newtheorem{lemma}[theorem]{Lemma}
\newtheorem{proposition}[theorem]{Proposition}

\theoremstyle{remark}
\newtheorem{remark}[theorem]{Remark}

\theoremstyle{definition}
\newtheorem{example}[theorem]{Example}

\begin{document}

\title[Elliptic problems in H\"ormander--Roitberg spaces]{Elliptic problems with boundary operators \\ of higher orders in H\"ormander--Roitberg spaces}


\author[T.~Kasirenko]{Tetiana Kasirenko}
\address{Institute of Mathematics, National Academy of Sciences of Ukraine, Tere\-shchen\-kivska Str. 3, 01004 Kyiv-4, Ukraine}
\email{kasirenko@imath.kiev.ua}


\author[A. Murach]{Aleksandr Murach}
\address{Institute of Mathematics, National Academy of Sciences of Ukraine, Tere\-shchen\-kivska Str. 3, 01004 Kyiv-4, Ukraine}
\email{murach@imath.kiev.ua}


\subjclass[2010]{35J40, 46E35}

\keywords{Elliptic problem, H\"ormander space, slowly varying function, Fredholm property, generalized solution, \textit{a~priori} estimate, local regularity}

\begin{abstract}
We investigate elliptic boundary-value problems for which the maximum of the orders of the boundary operators is equal to or greater than the order of the elliptic differential equation. We prove that the operator corresponding to an arbitrary problem of this kind is bounded and Fredholm between appropriate Hilbert spaces which form certain two-sided scales and are built on the base of isotropic H\"ormander spaces. The differentiation order for these spaces is given by an arbitrary real number and positive function which varies slowly at infinity in the sense of Karamata. We establish a local \textit{a priori} estimate for the generalized solutions to the problem and investigate their local regularity (up to the boundary) on these scales. As an application, we find sufficient conditions under which the solutions have continuous classical derivatives of a given order.
\end{abstract}

\maketitle

\section{Introduction}\label{sec1}

Among elliptic boundary-value problems occur problems with boundary conditions whose orders are equal to or greater than the order of the elliptic differential equation for which the problem is posed. Such problems appear specifically in acoustic, hydrodynamics, and the theory of stochastic processes \cite{Krasil'nikov61, Venttsel59, VeshevKouzov77}. The known Ventcel' elliptic boundary-value problem \cite{Venttsel59} apparently was the first such a problem arisen in applications. It consists of an elliptic differential equation of the second order and a boundary condition of the same order and was applied initially to investigation of diffusion processes. A number of papers are devoted to this problem; see, e.g. \cite{Bonnaillie-NoelDambrineHerauVial10, LukyanovNazarov98, LuoTrudinger91}. In acoustic, of an interest is the elliptic problem that consists of the Helmholtz equation and a certain boundary condition of the fifth order \cite{Krasil'nikov61, VeshevKouzov77}. From the theoretical point of view, the simplest example of such elliptic problems is the problem consisting of the Laplace equation and the boundary condition $\partial^{k}u/\partial\nu^{k}=g$, where $k\geq2$ and $\nu$ is a unit vector of the normal to the boundary of the domain in which the equation is given \cite{Bitsadze90, Sokolovskiy88} (some generalizations of this example are considered in \cite{Karachik92, Karachik96}). All these elliptic problems are not regular so that the classical Green formula does not hold for them, which complicates their investigation.

The basic properties of general elliptic boundary-value problems consist in that these problems are Fredholm on appropriate pairs of H\"older or positive Sobolev spaces and that their solutions admit \emph{a priory} estimates and satisfy the property of increase in smoothness in these spaces; see, e.g., the fundamental paper by Agmon, Douglis, and Nirenberg \cite{AgmonDouglisNirenberg59} and Agranovich's survey \cite{Agranovich97}. These properties relate in particular to the nonregular elliptic problems mentioned above.

Of great interest in applications are elliptic problems whose right-hand sides are irregular distributions. Such problems appear specifically in investigations of Green function of elliptic problems, in study of elliptic problems with power singularities in the right-hand sides, and in the spectral theory of elliptic differential operators; see, e.g., monographs \cite{Berezansky68, Roitberg96}. The corresponding theory of elliptic problems in spaces of distributions was developed by Berezansky, Krein, and Roitberg \cite{BerezanskyKreinRoitberg63, Berezansky68, Roitberg64, Roitberg65, Roitberg96}, H\"ormander \cite{Hermander63}, Lions and Magenes \cite{LionsMagenes72}, and Schechter \cite{Schechter60, Schechter63a, Schechter63b, Schechter64}. Its main achievements are theorems on complete collections of isomorphisms realized by elliptic problems on two-sided scales of normed spaces constructed on the base of Sobolev spaces. Briefly saying, these theorems assert that the operator generated by an elliptic problem sets an isomorphism between appropriate distribution spaces whose regularity indexes equal $s$ and $s-2q$ respectively, where $s$ is an arbitrary real number and $2q$ is the even order of the corresponding elliptic differential equation. Such theorems were proved for regular elliptic problems, the classical Green formula playing a decisive role in proofs.

Apparently, the isomorphism theorem by Roitberg \cite{Roitberg64, Roitberg65} is the most meaningful among these achievements at least because other results can be deduced \cite{Roitberg68} from this theorem. Later on Roitberg and Kostarchuk \cite{Roitberg69, Roitberg70, KostarchukRoitberg73} extended this isomorphism theorem over nonregular elliptic problems, specifically \cite{KostarchukRoitberg73} over problems with boundary conditions of higher orders with respect to the order of the corresponding elliptic differential equation. This theorem and its various applications are set forth in monographs by Berezansky \cite{Berezansky68} (for regular elliptic problems), Roitberg \cite{Roitberg96, Roitberg99}, Kozlov, Maz'ya and Rossmann \cite{KozlovMazyaRossmann97}, and Agranovich's survey \cite{Agranovich97}. As a rule, the isomorphism theorem is formulated separately for elliptic problems with low orders boundary conditions  and with higher orders ones;
see \cite[Theorems 4.1.2 and 4.1.3]{Roitberg96}  (or
\cite[Theorems 3.4.1 and 4.1.4]{KozlovMazyaRossmann97} stated in terms of the Fredholm property). To describe the range of the operator generated by a nonregular elliptic problem, Kozlov, Maz'ya and Rossmann use a special Green formula and  corresponding elliptic boundary-value problem with additional unknown functions in boundary conditions. These formula and problem were considered first by Lawruk \cite{Lawruk63a}. Note that the mentioned monographs \cite{Roitberg96, KozlovMazyaRossmann97} and survey \cite{Agranovich97} also examine elliptic problems for systems of differential equations and that the most general isomorphism theorem of Roitberg's type is proved by Kozhevnikov \cite{Kozhevnikov01} for pseudodifferential elliptic problems which form the Boutet de Monvel algebra. Roitberg's isomorphism theorem deals with normed function spaces which are certain modifications of Sobolev spaces. Such Sobolev--Roitberg spaces form two-sided scales of spaces and coincide with Sobolev spaces for the sufficiently large regularity index. The concept of Sobolev--Roitberg spaces proved to be fruitful not only for elliptic problems but also for parabolic and hyperbolic problems \cite{EidelmanZhitarashu98, Roitberg99}.

Although Sobolev spaces play a fundamental role in the modern theory of partial differential equations, the Sobolev scale is too course for various problems (see monographs \cite{Hermander63, Hermander83, MikhailetsMurach14, NicolaRodino10, Paneah00}). There is the necessity to use the classes of function spaces calibrated with the help of a function parameter, which characterizes the regularity of functions or distributions more finely than the number parameter used for the classical Sobolev or H\"older spaces. In 1963 H\"ormander \cite{Hermander63} introduced a broad and fruitful generalization of Sobolev spaces in this sense and gave applications of the spaces introduced to investigation of solvability of partial differential equations (see also his monograph \cite{Hermander83}). In the most interest case of Hilbert spaces, the H\"ormander space $\mathcal{B}_{2,\mu}$ consists of all tempered distributions $w$ in $\mathbb{R}^{n}$ such that $\mu\widehat{w}\in L_{2}(\mathbb{R}^{n})$
and is endowed with the norm $\|w\|_{\mathcal{B}_{2,\mu}}:=\|\mu\widehat{w}\|_{L_{2}(\mathbb{R}^{n})}$. Here, $\mu:\mathbb{R}^{n}\to(0,\infty)$ is a sufficiently general weight function, and $\widehat{w}$ is the Fourier transform of $w$. Of late decades H\"ormander spaces and their various generalizations---called the spaces of generalized smoothness---are actively investigated and applied to problems of mathematical analysis, to differential equations and stochastic processes (see monographs \cite{Jacob010205, MikhailetsMurach14, NicolaRodino10, Paneah00, Stepanets05, Triebel01} and references therein).

Recently Mikhailets and Murach \cite{MikhailetsMurach05UMJ5, MikhailetsMurach06UMJ2, MikhailetsMurach06UMJ3, MikhailetsMurach06UMJ11, MikhailetsMurach06UMB4, MikhailetsMurach07UMJ5, MikhailetsMurach08UMJ4} built a theory of solvability of regular elliptic boundary-value problems for H\"ormander spaces $H^{s,\varphi}:=\mathcal{B}_{2,\mu}$ and their modifications by Roitberg. In this theory, $\mu(\xi)=\langle\xi\rangle^{s}\varphi(\langle\xi\rangle)$ for arbitrary $\xi\in\mathbb{R}^{n}$, whereas $s$ is a real number and $\varphi:[1,\infty)\to(0,\infty)$ is a Borel measurable function  varying slowly in the sense of Karamata at infinity (as usual, $\langle\xi\rangle:=(1+|\xi|^{2})^{1/2}$). Every space $H^{s,\varphi}$ is attached to the Sobolev scale $\{H^{s}=H^{s,1}:s\in\mathbb{R}\}$ with the help of the number parameter $s$ and is obtained by the interpolation with a function parameter between the Sobolev spaces $H^{s-\varepsilon}$ and $H^{s+\delta}$ with $\varepsilon,\delta>0$. Using this interpolation method, Mikhailets and Murach extended all basic theorems on properties of regular elliptic problems from the Sobolev spaces over the indicated H\"ormander spaces. This theory also contains theorems on solvability of elliptic systems on manifolds in H\"ormander spaces \cite{Murach08MFAT2, Zinchenko17OpenMath}. It is set forth in monograph \cite{MikhailetsMurach14} and surveys \cite{09OperatorTheory191, MikhailetsMurach12BJMA2}. Nowadays this theory is extended \cite{AnopKasirenko16MFAT, AnopMurach14UMJ, AnopMurach14MFAT, MurachZinchenko13MFAT1, ZinchenkoMurach12UMJ11} over the class of all Hilbert spaces that are interpolation spaces between inner product Sobolev spaces; these interpolation spaces form a subclass of H\"ormander spaces $\mathcal{B}_{2,\mu}$  \cite{MikhailetsMurach13UMJ3, MikhailetsMurach15ResMath1}. Note that H\"ormander spaces and interpolation with a function parameter also find applications to parabolic initial-boundary value problems \cite{Los17UMJ9, LosMikhailetsMurach17CPAA, LosMurach13MFAT2, LosMurach17OpenMath}.

The goal of the present paper is to develop a version of this theory for nonregular elliptic problems with boundary conditions of higher orders with respect to the order of the corresponding elliptic differential equation. We build this version in the framework of the two-sided scale of H\"ormander spaces $H^{s,\varphi}$, with $-\infty<s<\infty$, modified in the sense of Roitberg. This modification was investigated in \cite{MikhailetsMurach08UMJ4}.

This paper consists of six sections. Section~\ref{sec1} is Introduction. In Section~\ref{sec2}, we formulate the elliptic problem under consideration and discuss the formally adjoint problem with respect to the special Green formula. In section~\ref{sec3}, we consider H\"ormander spaces and their modifications in the sense of Roitberg. Section~\ref{sec4} contains the main results of the paper. They are  theorems on the character of solvability of the elliptic problem on the two-sided scale of H\"ormander--Roitberg spaces and on local properties (up to the boundary) of its generalized solutions in these spaces. Among them is Theorem~\ref{3th2} on a complete collection of isomorphisms in H\"ormander--Roitberg spaces. As an application of these spaces, we give new sufficient conditions under which the generalized solutions have continuous partial derivatives of a prescribed order. Specifically, we obtain conditions for the generalized solutions to be classical. Section~\ref{sec5} is devoted to the method of interpolation with a function parameter between Hilbert spaces, which play a main role in the proof of the key Theorem~\ref{3th1}. The main results are proved in Section~\ref{sec6}.

\section{Statement of the problem}\label{sec2}

Let $\Omega$ be an open bounded domain in $\mathbb{R}^{n}$ with $n\geq2$. We suppose that its boundary $\Gamma:=\partial\Omega$ is an infinitely smooth closed manifold of dimension $n-1$, the $C^\infty$-structure on $\Gamma$ being induced by $\mathbb{R}^{n}$. Let $\nu(x)$ denote the unit vector of the inward normal to the boundary $\Gamma$ at a point $x\in\Gamma$.

We consider the boundary-value problem
\begin{gather}\label{3f1}
Au=f\quad\mbox{in}\quad\Omega,\\
B_{j}u=g_{j}\quad\mbox{on}\quad\Gamma,
\quad j=1,...,q.\label{3f2}
\end{gather}
Here,
$$
A:=A(x,D):=\sum_{|\mu|\leq 2q}a_{\mu}(x)D^{\mu}\
$$
is a linear differential operator on $\overline{\Omega}=\Omega\cup\Gamma$ of an arbitrary even order $2q\geq\nobreak2$. Besides, each
$$
B_{j}:=B_{j}(x,D):=\sum_{|\mu|\leq m_{j}}b_{j,\mu}(x)D^{\mu}\
$$
is a linear boundary differential operator on $\Gamma$ of an arbitrary order $m_{j}\geq0$. All the coefficients of these operators are complex-valued infinitely smooth functions given on $\overline{\Omega}$ and $\Gamma$ respectively. In the paper, all functions or distributions are supposed to be complex-valued, and hence all function spaces considered are complex.

Here and below, we use the standard designations:
$\mu:=(\mu_{1},\ldots,\mu_{n})$ is a multi-index, $|\mu|:=\mu_{1}+\cdots+\mu_{n}$,
$D^{\mu}:=D_{1}^{\mu_{1}}\cdots D_{n}^{\mu_{n}}$, $D_{k}:=i\partial/\partial x_{k}$ for each $k\in\{1,...,n\}$, where $i$ is imaginary unit and $x=(x_1,\ldots,x_n)$ is an arbitrary point in $\mathbb{R}^{n}$. We also put $D_{\nu}:=i\partial/\partial\nu(x)$.

Throughout the paper, we suppose that the boundary-value problem \eqref{3f1}, \eqref{3f2} is elliptic in the domain $\Omega$. This means that the differential operator $A$ is properly elliptic on $\overline{\Omega}$ and that the system of boundary operators $B:=(B_{1},\ldots,B_{q})$ satisfies Lopatinskii condition with respect to $A$ on $\Gamma$ (see, e.g., survey \cite[Section~1.2]{Agranovich97}).

\begin{example}\label{3ex1} \rm Consider a boundary-value problem that consists of the partial differential equation \eqref{3f1} (where the operator $A$ is properly elliptic on $\overline{\Omega}$) and the boundary conditions
\begin{equation*}
\frac{\partial^{k+j-1}u}{\partial\zeta^{k+j-1}}+\sum_{|\mu|< k+j-1}b_{j,\mu}(x)D^{\mu}=g_j\quad\text{on}\quad\Gamma,
\quad j=1,...,q.
\end{equation*}
Here, $0\leq k\in\mathbb{Z}$, whereas $\zeta:\Gamma\to\mathbb{R}^{n}$ is an infinitely smooth field of vectors $\zeta(x)$ which are nontangential to $\Gamma$ at every point $x\in\Gamma$. It is easy to verify that this boundary-value problem is elliptic in $\Omega$. If $k\leq q$, then it is regular elliptic (see, e.g., \cite[Subsection~5.2.1, Remark~4]{Triebel95}). Specifically, we may put $A:=\Delta^{q}$ and $\zeta(x):=\nu(x)$ for every $x\in\Gamma$. Here, as usual, $\Delta$ is the Laplace operator.
\end{example}

Henceforth we suppose that
\begin{equation*}
m:=\max\{m_{1},\ldots,m_{q}\}\geq2q.
\end{equation*}
Put $r:=m+1$ for the sake of convenience.

With the problem \eqref{3f1}, \eqref{3f2}, we associate the linear mapping
\begin{equation}\label{3f3}
\begin{gathered}
u\mapsto(Au,B_{1}u,...,B_{q}u)=(Au,Bu),\quad
\mbox{where}\quad u\in C^{\infty}(\overline{\Omega}).
\end{gathered}
\end{equation}
We will investigate properties of an extension by continuity of this mapping on appropriate pairs of Hilbert spaces introduced in the next section. To describe the range of this extension, we need the following special Green formula \cite[formula~(4.1.10)]{KozlovMazyaRossmann97}:
\begin{align*}
&(Au,v)_{\Omega}+\sum_{j=1}^{r-2q}(D_{\nu}^{j-1}Au,w_{j})_{\Gamma}+
\sum_{j=1}^{q}(B_{j}u,h_{j})_{\Gamma}\\
&=(u,A^{+}v)_{\Omega}+\sum_{k=1}^{r}\biggl(D_{\nu}^{k-1}u,K_{k}v+
\sum_{j=1}^{r-2q}R_{j,k}^{+}w_{j}+
\sum_{j=1}^{q}Q_{j,k}^{+}h_{j}\biggr)_{\Gamma}
\end{align*}
for arbitrary $u,v\in C^{\infty}(\overline{\Omega})$ and $w_{1},\ldots,w_{r-2q},h_{1},\ldots,h_{q}\in C^{\infty}(\Gamma)$. Here, $(\cdot,\cdot)_{\Omega}$ and $(\cdot,\cdot)_{\Gamma}$ denote the inner products in the Hilbert spaces $L_{2}(\Omega)$ and $L_{2}(\Gamma)$ of square integrable functions over $\Omega$ and $\Gamma$ respectively and later on denote the extension of these inner products by continuity. As usual, the differential operator $A^{+}$ is formally adjoint to $A$; namely,
$$
(A^{+}v)(x):=
\sum_{|\mu|\leq2q}D^{\mu}\bigl(\overline{a_{\mu}(x)}v(x)\bigr).
$$
Besides, all $R_{j,k}^{+}$ and $Q_{j,k}^{+}$ are the tangent differential operators which are adjoint respectively to $R_{j,k}$ and $Q_{j,k}$ with respect to $(\cdot,\cdot)_{\Gamma}$, with the  linear tangent differential operators $R_{j,k}:=R_{j,k}(x,D_{\tau})$ and $Q_{j,k}:=Q_{j,k}(x,D_{\tau})$ being taken from the representation of the boundary differential operators $D_{\nu}^{j-1}A$ and $B_{j}$ in the form
\begin{gather*}
D_{\nu}^{j-1}A(x,D)=\sum_{k=1}^{r}R_{j,k}(x,D_{\tau})D_{\nu}^{k-1},\quad
j=1,\ldots,r-2q,\\
B_{j}(x,D)=\sum_{k=1}^{r}Q_{j,k}(x,D_{\tau})D_{\nu}^{k-1},\quad
j=1,\ldots,q.
\end{gather*}
Note that $\mathrm{ord}\,R_{j,k}\leq 2q+j-k$ and $\mathrm{ord}\,Q_{j,k}\leq m_{j}-k+1$, with $R_{j,k}=0$ if $k\geq2q+j+1$ and with $Q_{j,k}=0$ if $k\geq m_{j}+2$. Finally, each $K_{k}:=K_{k}(x,D)$ is a certain linear boundary differential operator on $\Gamma$ of the order $\mathrm{ord}\,K_{k}\leq2q-k$ with coefficients from $C^{\infty}(\Gamma)$. Of course, if $k\geq2q+1$, then $K_{k}=0$.

Being based on the special Green formula, we consider the following boundary-value problem in $\Omega$ with $r-q$ additional unknown functions on $\Gamma$:
\begin{gather}\label{3f4}
A^{+}v=\omega\quad\mbox{in}\quad\Omega,\\
K_{k}v+\sum_{j=1}^{r-2q}R_{j,k}^{+}w_{j}+
\sum_{j=1}^{q}Q_{j,k}^{+}h_{j}=\theta_{k}\quad
\mbox{on}\quad\Gamma,\quad k=1,...,r.\label{3f5}
\end{gather}
Here, the function $v$ on $\overline{\Omega}$ and $r-q$ functions $w_{1},\ldots,w_{r-2q},h_{1},\ldots,h_{q}$ on $\Gamma$ are unknowns.
This problem is called formally adjoint to the problem \eqref{3f1}, \eqref{3f2} with respect to the special Green formula. The problem \eqref{3f1}, \eqref{3f2} is elliptic in $\Omega$ if and only if the formally adjoint problem \eqref{3f4}, \eqref{3f5} is elliptic in a relevant sense \cite[Theorem~4.1.1]{KozlovMazyaRossmann97}.

\section{H\"ormander spaces and their modifications\\in the sense of Roitberg}\label{sec3}

Following monograph \cite[Sections 1.3, 3.2, and 4.2]{MikhailetsMurach14}, we will consider H\"ormander spaces $H^{s,\varphi}$ and their modifications in the sense of Roitberg and discuss some of their properties. This spaces are parametrized with an arbitrary real number $s$ and function parameter $\varphi$ from the class $\mathcal{M}$.

By definition, the class $\mathcal{M}$ consists of all Borel measurable functions $\varphi:\nobreak[1,\infty)\rightarrow(0,\infty)$ that satisfy the following two conditions:
\begin{itemize}
\item[(i)] both the functions $\varphi$ and $1/\varphi$ are bounded on each compact interval $[1,b]$, with $1<b<\infty$;
\item[(ii)] $\varphi(\lambda t)/\varphi(t)\rightarrow 1$ as $t\rightarrow\infty$ for every  $\lambda>0$.
\end{itemize}
Property (ii) means that $\varphi$ is a slowly varying function at infinity in the sense of Karamata~\cite{Karamata30a}. Slowly varying functions are well investigated and have various applications  \cite{Seneta76, BinghamGoldieTeugels89}.

A standard example of a function $\varphi\in\mathcal{M}$ is a continuous function $\varphi:[1,\infty)\to(0,\infty)$ such that
\begin{equation*}
\varphi(t):=(\log t)^{r_{1}}(\log\log
t)^{r_{2}}\ldots(\underbrace{\log\ldots\log}_{k\;\mathrm{times}} t)^{r_{k}}\quad\mbox{for}\quad t\gg1.
\end{equation*}
Here, the integer $k\geq1$ and real numbers $r_{1},\ldots,r_{k}$ are  arbitrarily chosen.

This class admits the following description
$$
\varphi\in\mathcal{M}\;\;\Leftrightarrow\;\;
\varphi(t)=\exp\Biggl(\beta(t)+
\int\limits_{1}^{\:t}\frac{\gamma(\tau)}{\tau}\;d\tau\Biggr)\;\,
\mbox{for}\;\,t\geq1,
$$
where a bounded Borel measurable function $\beta:[1,\infty)\to\mathbb{R}$ has a finite limit at infinity, and a continuous function $\gamma:[1,\infty)\to\mathbb{R}$ converges to zero  at infinity. This description follows directly from Karamata's representation theorem (see, e.g., \cite[Section~1.2]{Seneta76}).

Let $s\in\mathbb{R}$ and $\varphi\in\mathcal{M}$. We first consider the H\"ormander space $H^{s,\varphi}$ over $\mathbb{R}^{n}$ with $n\geq1$ and then discuss its versions for $\Omega$ and $\Gamma$.

By definition, the linear space $H^{s,\varphi}(\mathbb{R}^{n})$ consists of all distributions $w\in\mathcal{S}'(\mathbb{R}^{n})$ that their Fourier transform $\widehat{w}$
is locally Lebesgue integrable over $\mathbb{R}^{n}$ and satisfies the condition
$$
\int\limits_{\mathbb{R}^{n}}
\langle\xi\rangle^{2s}\varphi^{2}(\langle\xi\rangle)\,
|\widehat{w}(\xi)|^{2}\,d\xi<\infty.
$$
Here, as usual, $\mathcal{S}'(\mathbb{R}^{n})$ denotes the linear topological space of all tempered distributions on $\mathbb{R}^{n}$, and
$\langle\xi\rangle:=(1+|\xi|^{2})^{1/2}$ is the smoothed modulus of $\xi\in\mathbb{R}^{n}$. The space $H^{s,\varphi}(\mathbb{R}^{n})$ is
endowed with the inner product
$$
(w_{1},w_{2})_{H^{s,\varphi}(\mathbb{R}^{n})}:=
\int\limits_{\mathbb{R}^{n}}
\langle\xi\rangle^{2s}\varphi^{2}(\langle\xi\rangle)\,
\widehat{w_{1}}(\xi)\,\overline{\widehat{w_{2}}(\xi)}\,d\xi
$$
and the corresponding norm
$$
\|w\|_{H^{s,\varphi}(\mathbb{R}^{n})}:=
(w,w)_{H^{s,\varphi}(\mathbb{R}^{n})}^{1/2}.
$$
This space is complete and separable with respect to this norm and is embedded continuously in $\mathcal{S}'(\mathbb{R}^{n})$. The set $C^{\infty}_{0}(\mathbb{R}^{n})$ of test functions is dense in $H^{s,\varphi}(\mathbb{R}^{n})$.

The space $H^{s,\varphi}(\mathbb{R}^{n})$ is an isotropic Hilbert case of the spaces $\mathcal{B}_{p,\mu}$ introduced and investigated by H\"ormander \cite[Section 2.2]{Hermander63} (see also his monograph \cite[Section 10.1]{Hermander83}). Namely, $H^{s,\varphi}(\mathbb{R}^{n})=\mathcal{B}_{p,\mu}$ if $p=2$ and $\mu(\xi)\equiv\langle\xi\rangle^{s}\varphi(\langle\xi\rangle)$. Note that the inner product H\"ormander spaces $\mathcal{B}_{2,\mu}$  coincide with the spaces introduced and investigated by Volevich and Paneah in \cite[Section~2]{VolevichPaneah65}.

In the case of $\varphi(t)\equiv1$, the space $H^{s,\varphi}(\mathbb{R}^{n})$ becomes the inner product Sobolev space $H^{s}(\mathbb{R}^{n})$ of order $s$. Generally,
\begin{equation}\label{3f7}
H^{s+\varepsilon}(\mathbb{R}^{n})\hookrightarrow H^{s,\varphi}(\mathbb{R}^{n})\hookrightarrow H^{s-\varepsilon}(\mathbb{R}^{n})
\quad\mbox{for every}\quad\varepsilon>0,
\end{equation}
with embeddings being continuous \cite[Lemma~1.5]{MikhailetsMurach14}.
They show that, the numeric parameter $s$ characterizes the main regularity of distributions $w\in H^{s,\varphi}(\mathbb{R}^{n})$, whereas the function parameter $\varphi$ sets certain supplementary regularity. Specifically, if $\varphi(t)\rightarrow\infty$ [or $\varphi(t)\rightarrow 0$] as $t\rightarrow\infty$, the parameter $\varphi$ will define supplementary positive [or negative] regularity. Thus, we can say that $\varphi$ refines the main regularity~$s$ in the class
$$
\{H^{s,\varphi}(\mathbb{R}^{n}):
s\in\mathbb{R},\varphi\in\mathcal{M}\}
$$
of function Hilbert spaces. This class is selected in \cite{MikhailetsMurach05UMJ5} and is called the refined Sobolev scale over $\mathbb{R}^{n}$ \cite[Section~1.3.3]{MikhailetsMurach14}.

Note \cite[Theorem~2.2.9]{Hermander63} that the spaces $H^{s,\varphi}(\mathbb{R}^{n})$ and $H^{-s,1/\varphi}(\mathbb{R}^{n})$ are mutually dual with respect to the extension by continuity of the inner product in $L_{2}(\mathbb{R}^{n})$. Here, the second space is well defined because $\varphi\in\mathcal{M}\Leftrightarrow1/\varphi\in\mathcal{M}$.

The Hilbert spaces $H^{s,\varphi}(\Omega)$ and $H^{s,\varphi}(\Gamma)$ are introduced in a standard way with the help of  $H^{s,\varphi}(\mathbb{R}^{n})$. Let us give the corresponding definitions.

By definition, the linear space $H^{s,\varphi}(\Omega)$ consists of the restrictions of all distributions $w\in H^{s,\varphi}(\mathbb{R}^{n})$ to the domain $\Omega$. The norm in $H^{s,\varphi}(\Omega)$ is defined by the formula
$$
\|u\|_{H^{s,\varphi}(\Omega)}:=
\inf\bigl\{\,\|w\|_{H^{s,\varphi}(\mathbb{R}^{n})}:\,
w\in H^{s,\varphi}(\mathbb{R}^{n}),\, w=u\;\,\mbox{in}\;\,\Omega\,\bigr\},
$$
where $u\in H^{s,\varphi}(\Omega)$. The space $H^{s,\varphi}(\Omega)$ is Hilbert and separable with respect to this norm and is embedded continuously in the linear topological space $\mathcal{D}'(\Omega)$ of all distributions on $\Omega$; besides, the set $C^{\infty}(\overline{\Omega})$ is dense in $H^{s,\varphi}(\Omega)$ \cite[Theorems 3.1 and 3.3(i)]{MikhailetsMurach14}.

Briefly saying, the linear space $H^{s,\varphi}(\Gamma)$ consists of all distributions on $\Gamma$ that yield elements of $H^{s,\varphi}(\mathbb{R}^{n-1})$ in local coordinates on $\Gamma$. Let us give a detailed definition. From the $C^{\infty}$-structure on $\Gamma$, we arbitrarily choose a finite atlas formed by certain local charts $\alpha_j: \mathbb{R}^{n-1}\leftrightarrow \Gamma_{j}$, where
$j=1,\ldots,\varkappa$. Here, the open sets $\Gamma_{1},\ldots,\Gamma_{\varkappa}$ form a covering of $\Gamma$. We also choose functions $\chi_j\in C^{\infty}(\Gamma)$, where
$j=1,\ldots,\varkappa$, that form a partition of unity on $\Gamma$ subject to the condition $\mathrm{supp}\,\chi_j\subset \Gamma_j$.

By definition, the linear space $H^{s,\varphi}(\Gamma)$ consists of all distributions $h\in\mathcal{D}'(\Gamma)$ such that $(\chi_{j}h)\circ\alpha_{j}\in H^{s,\varphi}(\mathbb{R}^{n-1})$ for each $j\in\{1,\ldots,\varkappa\}$. Here, of course, $\mathcal{D}'(\Gamma)$ is the linear topological space of all distributions on $\Gamma$, and $(\chi_{j}h)\circ\alpha_{j}$ denotes the representation of the distribution $\chi_{j}h$ in the local chart $\alpha_{j}$. The space $H^{s,\varphi}(\Gamma)$ is endowed with the norm
$$
\|h\|_{H^{s,\varphi}(\Gamma)}:=\biggl(\,\sum_{j=1}^{\varkappa}\,
\|(\chi_{j}h)\circ\alpha_{j}\|_{H^{s,\varphi}(\mathbb{R}^{n-1})}^{2}
\biggr)^{1/2}.
$$
This space is Hilbert and separable with respect to this norm and does not depend (up to equivalence of norms) on our choice of the atlas and the partition of unity on $\Gamma$ \cite[Theorem~2.21]{MikhailetsMurach14}. The space $H^{s,\varphi}(\Gamma)$ is embedded continuously in $\mathcal{D}'(\Gamma)$, and the set $C^{\infty}(\Gamma)$ is dense in $H^{s,\varphi}(\Gamma)$.

Note \cite[Theorem~2.3(v)]{MikhailetsMurach14} that the spaces $H^{s,\varphi}(\Gamma)$ and $H^{-s,1/\varphi}(\Gamma)$ are mutually dual (up to equivalence of norms) with respect to the form $(\cdot,\cdot)_{\Gamma}$, which is an extension by continuity of the inner product in $L_{2}(\Gamma)$. Specifically, the form $(h,v)_{\Gamma}$ is well defined for arbitrary $h\in H^{s,\varphi}(\Gamma)$ and $v\in C^{\infty}(\Gamma)$ and is equal to the value of the distribution $h\in\mathcal{D}'(\Gamma)$ on the test function $v$.

We have the classes of Hilbert spaces
\begin{equation}\label{3f8}
\{H^{s,\varphi}(\Omega):
s\in\mathbb{R},\varphi\in\mathcal{M}\}\quad\mbox{and}\quad
\{H^{s,\varphi}(\Gamma):
s\in\mathbb{R},\varphi\in\mathcal{M}\}.
\end{equation}
These classes respectively contain the inner product Sobolev spaces $H^{s}(\Omega):=H^{s,1}(\Omega)$ and $H^{s}(\Gamma):=H^{s,1}(\Gamma)$  of any order $s\in\mathbb{R}$. We have the dense compact  embeddings
\begin{equation}\label{3f10}
H^{s+\varepsilon,\varphi_{1}}(\Omega)\hookrightarrow H^{s,\varphi}(\Omega)\;\;\mbox{and}\;\;
H^{s+\varepsilon,\varphi_{1}}(\Gamma)\hookrightarrow H^{s,\varphi}(\Gamma)\;\;\mbox{whenever}\;\;\varepsilon>0,
\end{equation}
with $s\in\mathbb{R}$ and $\varphi,\varphi_{1}\in\mathcal{M}$ being arbitrary; see \cite[Theorems 2.3(iii) and 3.3(iii)]{MikhailetsMurach14}.

Discuss a connection between the scales \eqref{3f8}. Let $s>1/2$ and $\varphi\in\mathcal{M}$; then the trace mapping $u\mapsto u\!\upharpoonright\!\Gamma$, where $u\in C^{\infty}(\Gamma)$, extends uniquely (by continuity) to a bounded operator $R_{\Gamma}:H^{s,\varphi}(\Omega)\rightarrow H^{s-1/2,\varphi}(\Gamma)$, and this operator is surjective \cite[Theorem~3.5]{MikhailetsMurach14}. Thus, for every distribution $u\in H^{s,\varphi}(\Omega)$, its trace $R_{\Gamma}u$ on $\Gamma$ is well defined. But it is impossible to define this trace reasonably in the case where $s<1/2$. Namely, the above trace mapping cannot be extended to a continuous linear operator from the whole Sobolev space $H^{s}(\Omega)$ of order $s<\nobreak1/2$ to the linear topological space $\mathcal{D}'(\Gamma)$ (see, e.g., \cite[Remark~3.5]{MikhailetsMurach14}).

Hence, we cannot investigate the boundary-value problem \eqref{3f1}, \eqref{3f2} in the case where $u$ ranges over the whole space $H^{s,\varphi}(\Omega)$ with $s<m+1/2$. To study this problem for arbitrary real $s$, we have to use certain modifications of the H\"ormander space $H^{s,\varphi}(\Omega)$. We denote these modifications by $H^{s,\varphi,(k)}(\Omega)$, with $1\leq k\in\mathbb{Z}$, and introduce them by analogy with Roitberg's \cite{Roitberg64, Roitberg65} construction applied to Sobolev spaces (see also monographs \cite[Chapter~III, Section~6]{Berezansky68},  \cite[Section~2.1]{Roitberg96}, and survey \cite[Section~7.9]{Agranovich97}). Roitberg's approach was extended to the refined Sobolev scale by Mikhailets and Murach in \cite{MikhailetsMurach08UMJ4} and \cite[Section~4.2]{MikhailetsMurach14}.

Let, as above, $s\in\mathbb{R}$ and $\varphi\in\mathcal{M}$. We previously need to introduce a certain Hilbert space denoted by $H^{s,\varphi,(0)}(\Omega)$. Put  $H^{s,\varphi,(0)}(\Omega):=H^{s,\varphi}(\Omega)$ if $s\geq0$. But, if $s<0$, we let $H^{s,\varphi,(0)}(\Omega)$ denote the completion of $C^{\infty}(\overline{\Omega})$ with respect to the Hilbert norm
$$
\|u\|_{H^{s,\varphi,(0)}(\Omega)}:=
\sup\biggl\{\,\frac{|(u,v)_{\Omega}|}
{\;\quad\quad\quad\|v\|_{H^{-s,1/\varphi}(\Omega)}}\,:
\,v\in H^{-s,1/\varphi}(\Omega),\,v\neq0\biggr\}.
$$
Thus, given $s<0$, we have the Hilbert rigging of the space $L_{2}(\Omega)$ with the positive space $H^{-s,1/\varphi,(0)}(\Omega)=H^{-s,1/\varphi}(\Omega)$ and the negative space $H^{s,\varphi,(0)}(\Omega)$ (as to the notion and general properties of Hilbert riggings, see, e.g., \cite[Chapter~I, Section~1]{Berezansky68}). For every $s\in\mathbb{R}$, the spaces $H^{s,\varphi,(0)}(\Omega)$ and $H^{-s,1/\varphi,(0)}(\Omega)$ are mutually dual with respect to the form $(\cdot,\cdot)_{\Omega}$, which is an extension by continuity of the inner product in $L_{2}(\Omega)$ (if $s=0$, the duality is fulfilled up to equivalence of norms); see \cite[Theorem~3.9(iii)]{MikhailetsMurach14}. Specifically, the form $(u,v)_{\Omega}$ is well defined for arbitrary $u\in\nobreak H^{s,\varphi,(0)}(\Omega)$ and $v\in C^{\infty}(\overline{\Omega})$.

The negative space $H^{s,\varphi,(0)}(\Omega)$, where $s<0$, admits the following description. Consider the mapping $u\mapsto\mathcal{O}u$ where $u\in C^{\infty}(\overline{\Omega})$ and $(\mathcal{O}u)(x):=u(x)$ for $x\in\overline{\Omega}$ and $(\mathcal{O}u)(x):=0$ for $x\in\mathbb{R}^{n}\setminus\overline{\Omega}$. If $s<0$, this mapping extends uniquely (by continuity) to an isomorphism $\mathcal{O}$ between the Hilbert space $H^{s,\varphi,(0)}(\Omega)$ and the subspace
\begin{equation*}
\bigl\{v\in H^{s,\varphi}(\mathbb{R}^{n}):
\mathrm{supp}\,v\subseteq\overline{\Omega}\bigr\}
\end{equation*}
of $H^{s,\varphi}(\mathbb{R}^{n})$. This fact is a special case of the result proved in \cite[Section~3.4]{VolevichPaneah65} (see also \cite[Subsection~3.2.3]{MikhailetsMurach14}).

Now, let $1\leq k\in\mathbb{Z}$, and give a definition of the Hilbert space $H^{s,\varphi,(k)}(\Omega)$. Put
$$
E_{k}:=\{j-1/2: j\in\mathbb{Z},\;1\leq j\leq k\}.
$$
If $s\in\mathbb{R}\setminus E_{k}$, we let $H^{s,\varphi,(k)}(\Omega)$ denote the completion of $C^{\infty}(\overline{\Omega})$ with respect to the Hilbert norm
$$
\|u\|_{H^{s,\varphi,(k)}(\Omega)}:=
\biggl(\|u\|_{H^{s,\varphi,(0)}(\Omega)}^{2}+
\sum_{j=1}^{k}\;\|(D_{\nu}^{j-1}u)\!\upharpoonright\!\Gamma\|
_{H^{s-j+1/2,\varphi}(\Gamma)}^{2}\biggr)^{1/2}.
$$
If $s\in E_{k}$, we define the space $H^{s,\varphi,(k)}(\Omega)$ by means of the interpolation between Hilbert spaces. Namely, put
\begin{equation}\label{3fdef-interp}
H^{s,\varphi,(k)}(\Omega):=
\bigl[H^{s-1/2,\varphi,(k)}(\Omega),
H^{s+1/2,\varphi,(k)}(\Omega)\bigr]_{t^{1/2}}.
\end{equation}
A definition of this interpolation is given in Section~\ref{sec5}.

In the Sobolev case of $\varphi(t)\equiv1$, the space $H^{s,\varphi,(k)}(\Omega)$ was introduced by Roitberg
\cite{Roitberg64, Roitberg65}. Therefore we say that $H^{s,\varphi,(k)}(\Omega)$ is a modification of the H\"ormander space $H^{s,\varphi}(\Omega)$ in the sense of Roitberg or, briefly, is a H\"ormander--Roitberg space. The number $k$ is called the index of this modification. If $\varphi(t)\equiv1$, we will omit the index $\varphi$ in the designations of the spaces introduced in this section and below. Specifically, $H^{s,(k)}(\Omega):=H^{s,1,(k)}(\Omega)$ is a Sobolev--Roitberg space.

The space $H^{s,\varphi,(k)}(\Omega)$ with $s\in\mathbb{R}\setminus E_{k}$ admits the following description \cite[Theorem~4.11(i)]{MikhailetsMurach14}: the linear mapping
$$
T_{k}:u\mapsto\bigl(u,u\!\upharpoonright\!\Gamma,\ldots,
(D_{\nu}^{k-1}u)\!\upharpoonright\!\Gamma\bigr),\quad\mbox{where}\;\; u\in C^{\infty}(\overline{\Omega}),
$$
extends uniquely (by continuity) to an isometric linear operator
$$
T_{k}:H^{s,\varphi,(k)}(\Omega)\rightarrow
H^{s,\varphi,(0)}(\Omega)\oplus
\bigoplus_{j=1}^{k}\,H^{s-j+1/2,\varphi}(\Gamma)=:
\Pi_{s,\varphi,(k)}(\Omega,\Gamma),
$$
whose range consists of all vectors
$$
(u_{0},u_{1},\ldots,u_{k})\in\Pi_{s,\varphi,(k)}(\Omega,\Gamma)
$$
such that $u_{j}=R_{\Gamma}D_{\nu}^{j-1}u_{0}$ for each $j\in\{1,\ldots,k\}$ subject to $s>j-1/2$. If $s\in E_{k}$, this mapping extends uniquely to a bounded linear operator $T_{k}$ from $H^{s,\varphi,(k)}(\Omega)$ to $\Pi_{s,\varphi,(k)}(\Omega,\Gamma)$, but we cannot assert that this operator is isometric and that its range consists of all the vectors indicated above \cite[Remark~4.6]{MikhailetsMurach14}.

We have the class
\begin{equation*}
\bigl\{H^{s,\varphi,(k)}(\Omega):
s\in\mathbb{R},\varphi\in\mathcal{M}\bigr\}
\end{equation*}
of Hilbert spaces. They are separable
\cite[Theorem 4.12(i)]{MikhailetsMurach14}.
We may say that this class is a two-sided scale of H\"ormander--Roitberg spaces with respect to the number parameter $s$. Note that
\begin{equation}\label{3f11b}
H^{s,\varphi,(k)}(\Omega)=H^{s,\varphi}(\Omega)
\quad\mbox{for every}\;\;s>k-1/2
\end{equation}
up to equivalence of norms \cite[Theorem~4.12(iii)]{MikhailetsMurach14}.

We have the dense compact embedding
\begin{equation}\label{3f13}
H^{s+\varepsilon,\varphi_{1},(k)}(\Omega)\hookrightarrow H^{s,\varphi,(k)}(\Omega)\quad\mbox{whenever}\;\;\varepsilon>0,
\end{equation}
with $s\in\mathbb{R}$ and $\varphi,\varphi_{1}\in\mathcal{M}$ being arbitrary; see \cite[Theorem 4.12(iv)]{MikhailetsMurach14}. Specifically,
\begin{equation*}
H^{s+\varepsilon,(k)}(\Omega)\hookrightarrow H^{s,\varphi,(k)}(\Omega)\hookrightarrow H^{s-\varepsilon,(k)}(\Omega)\quad\mbox{whenever}\;\;\varepsilon>0.
\end{equation*}
Therefore we can put
\begin{equation*}
H^{-\infty,(k)}(\Omega):=
\bigcup_{s\in\mathbb{R},\,\varphi\in\mathcal{M}}H^{s,\varphi,(k)}(\Omega)
=\bigcup_{s\in\mathbb{R}}H^{s,(k)}(\Omega).
\end{equation*}
The linear space $H^{-\infty,(k)}(\Omega)$ is endowed with the topology of inductive limit. Note that the space $H^{-\infty,(k)}(\Omega)$ does not lie in  $\mathcal{D}'(\Omega)$. However, its part
\begin{equation*}
H^{k-1/2+}(\Omega):=\bigcup_{\substack{s>k-1/2,\\\varphi\in\mathcal{M}}}
H^{s,\varphi,(k)}(\Omega)=
\bigcup_{\substack{s>k-1/2,\\\varphi\in\mathcal{M}}}H^{s,\varphi}(\Omega)
\end{equation*}
lies in $\mathcal{D}'(\Omega)$. Here, the second equality is due to \eqref{3f11b}. Note that properties \eqref{3f11b} and \eqref{3f13} and, hence, these designations are also valid in $r=0$ case \cite[Theorem 3.9]{MikhailetsMurach14}.

The H\"ormander--Roitberg space $H^{s,\varphi,(k)}(\Omega)$ is suitable in the theory of boundary-value problems for arbitrary $s\in\mathbb{R}$ due to the following fact.

\begin{proposition}\label{3prop5}
Let an integer $k\geq1$. Suppose that $L$ is a linear differential operator on $\overline{\Omega}$ of order $\ell\leq k$ with coefficients from $C^{\infty}(\overline{\Omega})$. Then the mapping $u\mapsto Lu$, where $u\in C^{\infty}(\overline{\Omega})$, extends uniquely (by continuity) to a bounded linear operator
\begin{equation*}
L:\,H^{s,\varphi,(k)}(\Omega)\to H^{s-\ell,\varphi,(k-\ell)}(\Omega)
\quad\mbox{for all}\;\;s\in\mathbb{R}\;\;\mbox{and}\;\; \varphi\in\mathcal{M}.
\end{equation*}
Besides, suppose that $K$ is a boundary linear differential operator on $\Gamma$ of order $\varrho \leq k-1$ with coefficients from $C^{\infty}(\Gamma)$. Then the mapping $u\mapsto Ku$, where $u\in C^{\infty}(\overline{\Omega})$, extends uniquely (by continuity) to a bounded linear operator
\begin{equation*}
K:\,H^{s,\varphi,(k)}(\Omega)\to H^{s-\varrho-1/2,\varphi}(\Gamma)
\quad\mbox{for all}\;\;s\in\mathbb{R}\;\;\mbox{and}\;\; \varphi\in\mathcal{M}.
\end{equation*}
\end{proposition}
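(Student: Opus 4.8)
The plan is to establish, for each fixed $s\in\mathbb{R}$ and $\varphi\in\mathcal{M}$, the a~priori estimate $\|Lu\|_{H^{s-\ell,\varphi,(k-\ell)}(\Omega)}\leq C\,\|u\|_{H^{s,\varphi,(k)}(\Omega)}$ for all $u\in C^{\infty}(\overline{\Omega})$, with $C$ independent of $u$; the required bounded operator is then the extension of $u\mapsto Lu$ by continuity, its uniqueness being automatic since $C^{\infty}(\overline{\Omega})$ is dense in $H^{s,\varphi,(k)}(\Omega)$. Note that $k-\ell\geq0$ by hypothesis. I would first treat the case $s\notin E_{k}$, which forces $s-\ell\notin E_{k-\ell}$ (if $s-\ell=j'-1/2$ with $1\leq j'\leq k-\ell$, then $s=(j'+\ell)-1/2\in E_{k}$), so that both Roitberg norms are given by their explicit formulas and $T_{k-\ell}$, $T_{k}$ are isometric. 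Applying $T_{k-\ell}$ to $Lu$, it then suffices to bound separately the interior component $\|Lu\|_{H^{s-\ell,\varphi,(0)}(\Omega)}$ and the boundary components $\|(D_{\nu}^{j-1}Lu)\!\upharpoonright\!\Gamma\|_{H^{s-\ell-j+1/2,\varphi}(\Gamma)}$ for $j=1,\ldots,k-\ell$.

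For the boundary components I would write $L$ near $\Gamma$ in the normal--tangential form $L=\sum_{p=0}^{\ell}L_{p}(x,D_{\tau})D_{\nu}^{p}$, with each tangential operator $L_{p}$ of order $\mathrm{ord}\,L_{p}\leq\ell-p$, exactly as in the decompositions preceding the special Green formula. Expanding $D_{\nu}^{j-1}Lu$ and taking the trace on $\Gamma$, one obtains a finite sum of terms of the form $(\text{tangential operator of order}\leq\ell-p)\cdot(D_{\nu}^{j-1+p}u)\!\upharpoonright\!\Gamma$. The trace $(D_{\nu}^{j-1+p}u)\!\upharpoonright\!\Gamma$ is the $(j+p)$-th component of $T_{k}u$ and lies in $H^{s-(j-1+p)-1/2,\varphi}(\Gamma)$, which is defined precisely because $j-1+p\leq(k-\ell-1)+\ell=k-1$. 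Applying a tangential differential operator of order $\leq\ell-p$, which is bounded between the spaces $H^{\sigma,\varphi}(\Gamma)$ while lowering $\sigma$ by that order, places the term in $H^{s-\ell-j+1/2,\varphi}(\Gamma)$ after the indices cancel. Each factor is controlled by $\|T_{k}u\|_{\Pi_{s,\varphi,(k)}(\Omega,\Gamma)}=\|u\|_{H^{s,\varphi,(k)}(\Omega)}$, so the boundary components are estimated.

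The interior component $\|Lu\|_{H^{s-\ell,\varphi,(0)}(\Omega)}$ is the heart of the matter. When $s\geq0$ it reduces to the boundedness of $L\colon H^{s,\varphi}(\Omega)\to H^{s-\ell,\varphi}(\Omega)$, a standard mapping property of differential operators on the refined Sobolev scale obtained by interpolation with a function parameter from the Sobolev case, together with $\|u\|_{H^{s,\varphi}(\Omega)}\leq\|u\|_{H^{s,\varphi,(k)}(\Omega)}$. When $s<0$ the target is a negative space, and I would use its duality norm, writing $\|Lu\|_{H^{s-\ell,\varphi,(0)}(\Omega)}=\sup_{v\neq0}|(Lu,v)_{\Omega}|/\|v\|_{H^{\ell-s,1/\varphi}(\Omega)}$ with $v\in C^{\infty}(\overline{\Omega})$. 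Integrating by parts via the Green formula for $L$ gives $(Lu,v)_{\Omega}=(u,L^{+}v)_{\Omega}+\mathcal{B}(u,v)$, where $L^{+}$ is the formal adjoint of $L$ and $\mathcal{B}(u,v)$ is a boundary bilinear form pairing the traces $D_{\nu}^{a}u\!\upharpoonright\!\Gamma$ and $D_{\nu}^{b}v\!\upharpoonright\!\Gamma$ with $a,b\leq\ell-1$. Here $(u,L^{+}v)_{\Omega}$ is bounded by $\|u\|_{H^{s,\varphi,(0)}(\Omega)}\,\|L^{+}v\|_{H^{-s,1/\varphi}(\Omega)}\lesssim\|u\|_{H^{s,\varphi,(0)}(\Omega)}\,\|v\|_{H^{\ell-s,1/\varphi}(\Omega)}$, since $L^{+}$ has order $\ell$ and maps $H^{\ell-s,1/\varphi}(\Omega)$ into $H^{-s,1/\varphi}(\Omega)$; while $\mathcal{B}(u,v)$ is estimated through the duality between $H^{\sigma,\varphi}(\Gamma)$ and $H^{-\sigma,1/\varphi}(\Gamma)$, the traces of $u$ (of orders $\leq\ell-1\leq k-1$, hence components of $T_{k}u$) being controlled by the Roitberg norm and the traces of $v$ by $\|v\|_{H^{\ell-s,1/\varphi}(\Omega)}$. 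Dividing by $\|v\|_{H^{\ell-s,1/\varphi}(\Omega)}$ and taking the supremum yields the interior estimate. This duality argument, with its careful bookkeeping of the boundary form, is the step I expect to be the main obstacle.

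Finally, the exceptional case $s\in E_{k}$ is handled by interpolation: then $s\pm1/2$ are integers and hence lie outside $E_{k}$ (and $s\pm1/2-\ell$ outside $E_{k-\ell}$), so $L$ is already bounded between the corresponding endpoint spaces, and applying the interpolation functor of \eqref{3fdef-interp}, whose properties are recorded in Section~\ref{sec5}, to the single map $u\mapsto Lu$ gives the boundedness for $s\in E_{k}$ as well. The second assertion, for the boundary operator $K=\sum_{p=0}^{\varrho}K_{p}(x,D_{\tau})D_{\nu}^{p}$ with $\mathrm{ord}\,K_{p}\leq\varrho-p$, is proved more directly and uniformly in $s$: for $u\in C^{\infty}(\overline{\Omega})$ one has $Ku=\sum_{p=0}^{\varrho}K_{p}\bigl((D_{\nu}^{p}u)\!\upharpoonright\!\Gamma\bigr)$, where each trace $(D_{\nu}^{p}u)\!\upharpoonright\!\Gamma$ is a component of $T_{k}u$ (defined since $p\leq\varrho\leq k-1$) lying in $H^{s-p-1/2,\varphi}(\Gamma)$ and controlled by $\|u\|_{H^{s,\varphi,(k)}(\Omega)}$ through the boundedness of $T_{k}$; applying the tangential operator $K_{p}$ of order $\leq\varrho-p$ places $Ku$ in $H^{s-\varrho-1/2,\varphi}(\Gamma)$ with the desired bound, and extension by continuity completes the proof.
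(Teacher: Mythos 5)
Your proposal takes a completely different route from the paper's. The paper does not reprove the estimate at all: it quotes Roitberg's Lemma 2.3.1 and Corollary 2.3.1 for the Sobolev case $\varphi(t)\equiv1$ and then passes to general $\varphi\in\mathcal{M}$ by a single application of interpolation with the function parameter of Proposition~\ref{3prop1} between the Sobolev--Roitberg spaces $H^{s\mp\varepsilon,(k)}(\Omega)$ and the corresponding target spaces. Your plan instead redoes Roitberg's lemma from scratch directly on the refined scale, reserving interpolation only for the exceptional values $s\in E_{k}$. That is legitimate in principle, and your treatment of the boundary components of $T_{k-\ell}(Lu)$ and of the operator $K$ is sound; but the step you yourself flag as ``the main obstacle'' is left with a genuine gap, and it is precisely the step that the citation to Roitberg is meant to cover.

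Concretely: in the duality estimate for the interior component, the boundary form $\mathcal{B}(u,v)$ produced by integration by parts contains pairings $(D_{\nu}^{a}u,S_{ab}D_{\nu}^{b}v)_{\Gamma}$ with $b$ running up to $\ell-1$. The trace $D_{\nu}^{b}v\!\upharpoonright\!\Gamma$ of $v\in H^{\ell-s,1/\varphi}(\Omega)$ is defined and controlled by $\|v\|_{H^{\ell-s,1/\varphi}(\Omega)}$ only when $\ell-s>b+1/2$; for $b=\ell-1$ this forces $s<1/2$. Hence for $1/2\leq s<\ell$ (say $s=1$, $\ell=2$) your assertion that ``the traces of $v$ [are controlled] by $\|v\|_{H^{\ell-s,1/\varphi}(\Omega)}$'' fails for the top-order traces, and the supremum argument does not close. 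Some additional device is required in this regime (e.g., exploiting that for those $j$ with $s>j-1/2$ the trace $D_{\nu}^{j-1}u\!\upharpoonright\!\Gamma$ equals $R_{\Gamma}D_{\nu}^{j-1}u_{0}$, so the corresponding boundary terms can be recombined with the interior pairing); this is exactly the bookkeeping in Roitberg's proof that you have not supplied. A second, smaller slip: the case split for the interior component should be on the sign of $s-\ell$, not of $s$. If $0\leq s<\ell$, the target $H^{s-\ell,\varphi,(0)}(\Omega)$ is already the negative (dual-norm) space, and boundedness of $L\colon H^{s,\varphi}(\Omega)\to H^{s-\ell,\varphi}(\Omega)$ does not yield boundedness into $H^{s-\ell,\varphi,(0)}(\Omega)$: for negative orders these are different spaces, the former being dual to distributions supported in $\overline{\Omega}$ rather than to $H^{\ell-s,1/\varphi}(\Omega)$, with no natural bounded map from $H^{s-\ell,\varphi}(\Omega)$ into $H^{s-\ell,\varphi,(0)}(\Omega)$.
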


In the case of $\varphi(t)\equiv1$, this proposition is proved by Roitberg \cite[Lemma 2.3.1 and Corollary 2.3.1]{Roitberg96}. The general situation is derived from this case by interpolation with the help of Proposition~\ref{3prop1} given in Section~\ref{sec5} (cf. \cite[Proof of Theorem~4.13]{MikhailetsMurach14}). Note that the cases $\ell=0$ and $\varrho=0$ are admissible in Proposition~\ref{3prop5}. Specifically, the operator of multiplication by an arbitrary function from $C^{\infty}(\overline{\Omega})$ is bounded on every space $H^{s,\varphi,(k)}(\Omega)$.

\begin{remark}\label{3rem0}\rm
We also need the following version of the assertion of Proposition~\ref{3prop5} concerning the operator $L$: let the assumption of this proposition about $L$ be fulfilled; then the mapping $u\mapsto Lu$, where $u\in C^{\infty}(\overline{\Omega})$, extends uniquely (by continuity) to a bounded linear operator
\begin{equation*}
L:\,H^{s,\varphi,(k)}(\Omega)\to H^{s-\ell,\varphi,(q)}(\Omega)
\;\;\mbox{whenever}\;\;0\leq q\leq k-\ell\;\;\mbox{and}\;\; q\in\mathbb{Z}.
\end{equation*}
Here, $s\in\mathbb{R}$ and $\varphi\in\mathcal{M}$ are arbitrary. This version follows from Proposition~\ref{3prop5} and the inequality
\begin{equation}\label{rem3.2-estimate}
\|v\|_{H^{s-\ell,\varphi,(q)}(\Omega)}\leq c\,
\|v\|_{H^{s-\ell,\varphi,(k-\ell)}(\Omega)}\quad\mbox{for all}\quad
v\in C^{\infty}(\overline{\Omega}),
\end{equation}
which is used for $v:=Lu$. Here, $c$ is a certain positive number not depending on~$v$. If $s-\ell\notin E_{k-\ell}$, this inequality is evident, with $c=1$. For the rest~$s$, it is obtained by interpolation with the help of Proposition~\ref{3prop1} stated in Section~\ref{sec5}. Namely, let $\nobreak{s-\ell\in E_{k-\ell}}$. Since the identity mapping on $C^{\infty}(\overline{\Omega})$ extends uniquely to bounded operators
from $H^{s-\ell\mp1/4,(k-\ell)}(\Omega)$ to $H^{s-\ell\mp1/4,(q)}(\Omega)$, this mapping extends uniquely to a bounded operator from the space
\begin{equation*}
\bigl[H^{s-\ell-1/4,(k-\ell)}(\Omega),
H^{s-\ell+1/4,(k-\ell)}(\Omega)\bigr]_{\psi}=
H^{s-\ell,\varphi,(k-\ell)}(\Omega)
\end{equation*}
to the space
\begin{equation*}
\bigl[H^{s-\ell-1/4,(q)}(\Omega),H^{s-\ell+1/4,(q)}(\Omega)\bigr]_{\psi}=
H^{s-\ell,\varphi,(q)}(\Omega).
\end{equation*}
Here, $\psi$ is the interpolation parameter from Proposition~\ref{3prop1} in which $\varepsilon:=\delta:=1/4$. These equalities of spaces are valid up to equivalence of norms due to Proposition~\ref{3prop1}. This gives inequality \eqref{rem3.2-estimate} in the case of $s-\ell\in\nobreak E_{k-\ell}$.
\end{remark}

\section{Main results}\label{sec4}

Given $s\in\mathbb{R}$ and $\varphi\in\mathcal{M}$, we define the Hilbert space
\begin{equation*}
\mathcal{H}^{s-2q,\varphi,(r-2q)}(\Omega,\Gamma):=
H^{s-2q,\varphi,(r-2q)}(\Omega)\oplus\bigoplus_{j=1}^{q}
H^{s-m_j-1/2,\varphi}(\Gamma).
\end{equation*}
According to Proposition~\ref{3prop5}, mapping \eqref{3f3} extends uniquely (by continuity) to a bounded linear operator
\begin{equation}\label{3f14}
(A,B):H^{s,\varphi,(r)}(\Omega)\rightarrow
\mathcal{H}^{s-2q,\varphi,(r-2q)}(\Omega,\Gamma).
\end{equation}
The main results of the paper concern the properties of this operator, which corresponds to the elliptic boundary-value problem \eqref{3f1}, \eqref{3f2}. We will formulate them in this section.

Let $N$ denote the set of all solutions $u\in C^{\infty}(\overline{\Omega})$ to the boundary-value problem \eqref{3f1}, \eqref{3f2} in the homogeneous case where $f=0$ in $\Omega$ and each $g_{j}=0$ on $\Gamma$. Similarly, let $N_{\star}$ denote the set of all solutions
\begin{equation*}
(v,w_{1},\ldots,w_{r-2q},h_{1},\ldots,h_{q})\in C^{\infty}(\overline{\Omega})\times(C^{\infty}(\Gamma))^{r-q}
\end{equation*}
to the formally adjoint boundary-value problem \eqref{3f4}, \eqref{3f5}
in the homogeneous case where $\omega=0$ in $\Omega$ and each $\theta_{k}=0$ on $\Gamma$. Since both problems \eqref{3f1}, \eqref{3f2} and \eqref{3f4}, \eqref{3f5} are elliptic, both spaces $N$ and $N_{\star}$ are finite-dimensional \cite[Corollary~4.1.1]{KozlovMazyaRossmann97}.

\begin{theorem}\label{3th1}
The bounded operator \eqref{3f14} is Fredholm for arbitrary $s\in\mathbb{R}$ and $\varphi\in\mathcal{M}$. Its kernel is equal to $N$, and its range consists of all vectors
$$
(f,g_1,\ldots,g_q)\in\mathcal{H}^{s-2q,\varphi,(r-2q)}(\Omega,\Gamma)
$$
such that
\begin{equation}\label{3f15}
\begin{gathered}
(f_{0},v)_\Omega+\sum_{j=1}^{r-2q}(f_{j},w_{j})_{\Gamma}+
\sum_{j=1}^{q}(g_j,h_{j})_{\Gamma}=0 \\
\mbox{for every} \quad (v,w_{1},\ldots,w_{r-2q},h_{1},\ldots,h_{q})\in N_{\star},
\end{gathered}
\end{equation}
where $(f_{0},f_{1},\ldots,f_{r-2q}):=T_{r-2q}f$. The index of operator \eqref{3f14} equals $\dim N-\dim N_{\star}$ and therefore does not depend on $s$ and~$\varphi$.
\end{theorem}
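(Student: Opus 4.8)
The plan is to reduce Theorem~\ref{3th1} to the already-known Sobolev--Roitberg case $\varphi\equiv1$ and then pass to an arbitrary function parameter $\varphi\in\mathcal{M}$ by interpolation with a function parameter. First I would record the endpoint fact: for $\varphi\equiv1$ and every $s\in\mathbb{R}$ the bounded operator
\[
(A,B):H^{s,(r)}(\Omega)\to\mathcal{H}^{s-2q,(r-2q)}(\Omega,\Gamma)
\]
is Fredholm with kernel $N$, with range described by the orthogonality conditions \eqref{3f15}, and with index $\dim N-\dim N_{\star}$. This is the isomorphism theorem of Kostarchuk and Roitberg for elliptic problems with boundary conditions of higher orders \cite{KostarchukRoitberg73,Roitberg96}, stated in Fredholm form in \cite[Theorem~4.1.4]{KozlovMazyaRossmann97}. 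The crucial observation is that both $N$ and $N_{\star}$ consist of functions from $C^{\infty}(\overline{\Omega})$ (and $C^{\infty}(\Gamma)$), so they, and hence the kernel and the cokernel dimension, are the same at every endpoint level.

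Next I would fix $s\in\mathbb{R}$ and $\varphi\in\mathcal{M}$, choose $\varepsilon,\delta>0$, and apply Proposition~\ref{3prop1} componentwise to the direct sum. This yields, up to equivalence of norms and with one common interpolation parameter $\psi=\psi(\varphi,\varepsilon,\delta)$,
\[
H^{s,\varphi,(r)}(\Omega)=\bigl[H^{s-\varepsilon,(r)}(\Omega),\,H^{s+\delta,(r)}(\Omega)\bigr]_{\psi},
\]
\[
\mathcal{H}^{s-2q,\varphi,(r-2q)}(\Omega,\Gamma)=\bigl[\mathcal{H}^{s-2q-\varepsilon,(r-2q)}(\Omega,\Gamma),\,\mathcal{H}^{s-2q+\delta,(r-2q)}(\Omega,\Gamma)\bigr]_{\psi}.
\]
Since $(A,B)$ is bounded and Fredholm on each endpoint couple by the Sobolev case, and since its kernel $N$ is the same at both endpoints, I would invoke the theorem on interpolation of a Fredholm operator between Hilbert spaces \cite[Sec.~1.1, 4.2]{MikhailetsMurach14}. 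It gives at once that the operator \eqref{3f14} is bounded and Fredholm, that its kernel equals $N$, and that its index equals the common endpoint index $\dim N-\dim N_{\star}$; in particular the index does not depend on $s$ or $\varphi$, and the cokernel has dimension $\dim N_{\star}$.

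It then remains to identify the range. The range $R$ of \eqref{3f14} is a closed subspace of codimension $\dim N_{\star}$. On the other hand, writing the special Green formula with $\omega=0$ and all $\theta_k=0$ shows that for $u\in C^{\infty}(\overline{\Omega})$ and $(v,w_{1},\ldots,h_{q})\in N_{\star}$ the element $(f,g)=(A,B)u$ with $(f_{0},\ldots,f_{r-2q})=T_{r-2q}f$ satisfies \eqref{3f15}. The $\dim N_{\star}$ functionals appearing in \eqref{3f15} are continuous on $\mathcal{H}^{s-2q,\varphi,(r-2q)}(\Omega,\Gamma)$, because of the duality between $H^{\sigma,\varphi,(0)}(\Omega)$ and $H^{-\sigma,1/\varphi,(0)}(\Omega)$, between $H^{\sigma,\varphi}(\Gamma)$ and $H^{-\sigma,1/\varphi}(\Gamma)$, and the smoothness of the entries of $N_{\star}$; by density of $C^{\infty}(\overline{\Omega})$ they vanish on all of $R$. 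Thus $R$ is contained in the common zero set $V$ of these functionals. Since distinct elements of $N_{\star}$ give linearly independent functionals (as the Sobolev case exhibits $V$ as the range there), $V$ also has codimension $\dim N_{\star}$; a closed subspace of codimension $\dim N_{\star}$ contained in a subspace of the same finite codimension must coincide with it, whence $R=V$, which is exactly the description \eqref{3f15}.

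The hard part will be the consistency of the interpolation at the exceptional indices $s\in E_{r}$ (equivalently $s-2q\in E_{r-2q}$), where the spaces $H^{s,\varphi,(r)}(\Omega)$ and $H^{s-2q,\varphi,(r-2q)}(\Omega)$ are themselves defined by interpolation via \eqref{3fdef-interp} and where the operator $T_{r-2q}$ is no longer isometric. For such $s$ one cannot read off the interpolation identities of the second paragraph directly, and must instead obtain them by the reiteration property of interpolation with a function parameter, exactly in the spirit of the computation in Remark~\ref{3rem0}: one first treats a generic $s\notin E_{r}$ and then recovers the exceptional case by interpolating two neighboring generic cases with $\varepsilon=\delta=1/4$. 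One must also check that the components $(f_{0},\ldots,f_{r-2q})=T_{r-2q}f$ entering \eqref{3f15} remain the correct ones under this interpolation, so that the range description is genuinely preserved at half-integer $s$. Once this consistency at the exceptional points is secured, the Fredholm property, the kernel, the index formula, and the range description all transfer simultaneously from the Sobolev--Roitberg scale to the full H\"ormander--Roitberg scale.
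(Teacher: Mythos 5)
Your proposal is correct and follows the same overall route as the paper: establish the Sobolev--Roitberg endpoint case and transfer it to arbitrary $\varphi\in\mathcal{M}$ by interpolation with a function parameter, via Propositions~\ref{3prop1}, \ref{3prop3}, and~\ref{3prop4}. Two points differ. First, for the range you argue by codimension counting: the range is closed of codimension $\dim N_{\star}$, it is annihilated by the $\dim N_{\star}$ continuous functionals of \eqref{3f15}, and these are linearly independent, so the two subspaces coincide. The paper instead reads the range directly off Proposition~\ref{3prop3}, which asserts that the interpolated Fredholm operator has range $Y_{\psi}\cap T(X_{0})$; combined with the endpoint description \eqref{3f26} this yields \eqref{3f15} with no separate duality or independence argument. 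Both work; the paper's version is shorter because the needed fact is already packaged in the interpolation proposition, while yours is more self-contained but must verify continuity of the functionals (via the dualities for $H^{\sigma,\varphi,(0)}(\Omega)$ and $H^{\sigma,\varphi}(\Gamma)$ and the boundedness of $T_{r-2q}$) and their linear independence, which you do correctly by transferring it from the Sobolev case through the common dense set of smooth vectors. Second, the difficulty you flag at the exceptional half-integers $s\in E_{r}$ does not actually arise: the paper simply takes $\varepsilon=\delta\in(0,1/2)$, for which the hypothesis of Proposition~\ref{3prop1} (at least one of $s-\varepsilon>k-1/2$ and $s+\delta<k+1/2$) is automatically satisfied for \emph{every} $s$, so the identity $\bigl[H^{s-\varepsilon,(r)}(\Omega),H^{s+\varepsilon,(r)}(\Omega)\bigr]_{\psi}=H^{s,\varphi,(r)}(\Omega)$ holds uniformly in $s$ and no extra reiteration step of the kind sketched in Remark~\ref{3rem0} is needed. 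One small point you pass over: the description of the Sobolev-case range in terms of the formally adjoint problem is given in \cite[Theorem~4.1.4]{KozlovMazyaRossmann97} only for integer $s$; the paper extends it to fractional $s$ via the consistency property \eqref{range-Sobolev-case} before interpolating.
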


In view of this theorem, we recall that a linear bounded operator $T:E_{1}\rightarrow E_{2}$, where $E_{1}$ and $E_{2}$ are Banach spaces, is called Fredholm if its kernel $\ker T$ and co-kernel $E_{2}/T(E_{1})$ are both finite-dimensional. If this operator is Fredholm, its range is closed in $E_{2}$ (see, e.g., \cite[Lemma~19.1.1]{Hermander85}), and its index
$$
\mathrm{ind}\,T:=\dim\ker T-\dim(E_{2}/T(E_{1}))
$$
is well defined and finite.

Note that if $s>m+1/2$, the Fredholm bounded operator \eqref{3f14} acts between the (non-modified) H\"ormander spaces
$$
(A,B):\,H^{s,\varphi}(\Omega)\rightarrow
H^{s-2q,\varphi}(\Omega)\oplus
\bigoplus_{j=1}^{q}H^{s-m_{j}-1/2,\varphi}(\Gamma).
$$
This is due to \eqref{3f11b} in view of $r=m+1$.

If $N=\{0\}$ and $N_{\star}=\{0\}$, operator \eqref{3f14} is an isomorphism between the spaces $H^{s,\varphi,(r)}(\Omega)$ and $\mathcal{H}^{s-2q,\varphi,(r-2q)}(\Omega,\Gamma)$. In the general situation, this operator induces an isomorphism between their certain subspaces of finite co-dimension. We build the isomorphism with the help of the next fact.

\begin{lemma}\label{3lem1}
For arbitrary $s\in\mathbb{R}$ and $\varphi\in\mathcal{M}$, we have the following decompositions of the spaces $H^{s,\varphi,(r)}(\Omega)$ and
$\mathcal{H}^{s-2q,\varphi,(r-2q)}(\Omega,\Gamma)$ in the direct sum of their subspaces:
\begin{equation}\label{3f16}
H^{s,\varphi,(r)}(\Omega)=
N\dotplus\bigl\{u\in H^{s,\varphi,(r)}(\Omega):
(u_{0},\omega)_\Omega=0\;\,\mbox{for all}\;\,\omega\in N\bigr\}
\end{equation}
and
\begin{equation}\label{3f17}
\begin{aligned}
&\mathcal{H}^{s-2q,\varphi,(r-2q)}(\Omega,\Gamma)\\
&=G\dotplus\bigl\{ (f,g_1,\ldots,g_q)\in\mathcal{H}^{s-2q,\varphi,(r-2q)}(\Omega,\Gamma):
\mbox{\eqref{3f15} is true}\bigr\}.
\end{aligned}
\end{equation}
Here, $u_{0}$ is the initial component of the vector $T_{r}u=(u_{0},u_{1},\ldots,u_{r})$. Besides, $G$ is a certain finite-dimensional space such that $\dim G=\dim N_{\star}$ and $G\subset C^{\infty}(\overline{\Omega})\times(C^{\infty}(\Gamma))^{q}$ and that $G$ does not depend on $s$ and~$\varphi$.
\end{lemma}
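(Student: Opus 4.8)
The plan is to reduce both decompositions to one elementary fact from the theory of Banach spaces: if $H$ is a Hilbert space, $T\colon H\to\mathbb{C}^{d}$ is a bounded linear map, and $F\subset H$ is a $d$-dimensional subspace on which the restriction $T\!\upharpoonright\!F$ is bijective onto $\mathbb{C}^{d}$, then $P:=(T\!\upharpoonright\!F)^{-1}T$ is a bounded projection of $H$ onto $F$ with $\ker P=\ker T$, so that $H=F\dotplus\ker T$. I would apply this twice, producing the complement of $N$ in $H^{s,\varphi,(r)}(\Omega)$ and the complement $G$ in $\mathcal{H}^{s-2q,\varphi,(r-2q)}(\Omega,\Gamma)$, respectively. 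Throughout I use that $N$ and $N_{\star}$ are finite-dimensional and consist of smooth objects, and that smooth functions lie in (and are dense in) all the spaces in play.

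For \eqref{3f16}, fix a basis $\omega_{1},\ldots,\omega_{d}$ of $N\subset C^{\infty}(\overline{\Omega})$ and set $\Phi u:=\bigl((u_{0},\omega_{1})_{\Omega},\ldots,(u_{0},\omega_{d})_{\Omega}\bigr)$, where $u_{0}$ is the initial component of $T_{r}u$. This $\Phi\colon H^{s,\varphi,(r)}(\Omega)\to\mathbb{C}^{d}$ is bounded, since $T_{r}$ is bounded, the passage $u\mapsto u_{0}$ is a coordinate projection in $\Pi_{s,\varphi,(r)}(\Omega,\Gamma)$, and each form $(u_{0},\omega_{i})_{\Omega}$ is continuous on $H^{s,\varphi,(0)}(\Omega)$ by its duality with $H^{-s,1/\varphi,(0)}(\Omega)$. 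Its kernel is exactly the subspace on the right of \eqref{3f16}. Taking $F:=N$ and recalling that for smooth $u$ the initial component of $T_{r}u$ equals $u$ itself, I see that the matrix of $\Phi\!\upharpoonright\!N$ in this basis is the Gram matrix $\bigl[(\omega_{k},\omega_{i})_{\Omega}\bigr]_{i,k}$, which is nonsingular because $\omega_{1},\ldots,\omega_{d}$ are linearly independent in $L_{2}(\Omega)$. Thus $\Phi\!\upharpoonright\!N$ is bijective, and the abstract fact yields \eqref{3f16}.

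For \eqref{3f17}, fix a basis of $N_{\star}$ and let $L_{i}$ be the functional on $\mathcal{H}^{s-2q,\varphi,(r-2q)}(\Omega,\Gamma)$ obtained by evaluating the left-hand side of \eqref{3f15} at the $i$-th basis vector, with $(f_{0},\ldots,f_{r-2q}):=T_{r-2q}f$. Each $L_{i}$ is bounded by the boundedness of $T_{r-2q}$ and the dualities with respect to $(\cdot,\cdot)_{\Omega}$ and $(\cdot,\cdot)_{\Gamma}$; put $\Psi:=(L_{1},\ldots,L_{d})$, so that $\ker\Psi$ is the subspace in \eqref{3f17}. The hard part will be to show that $L_{1},\ldots,L_{d}$ are linearly independent already on the smooth elements $C^{\infty}(\overline{\Omega})\times(C^{\infty}(\Gamma))^{q}$, the difficulty being that the components $f_{0},\ldots,f_{r-2q}$ of $T_{r-2q}f$ are not free but coupled through the normal traces. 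I would disentangle them by successive testing: a vanishing combination corresponds to some $(v,w,h)\in N_{\star}$ on which the form in \eqref{3f15} vanishes for all smooth $(f,g)$; testing $f=0$ with arbitrary smooth $g_{j}$ gives each $h_{j}=0$, testing $g=0$ with $f\in C^{\infty}_{0}(\Omega)$ (so $T_{r-2q}f=(f,0,\ldots,0)$) gives $v=0$, and testing $g=0$ with smooth $f$ whose normal traces $(D_{\nu}^{j-1}f)\!\upharpoonright\!\Gamma$ are prescribed independently via a collar of $\Gamma$ gives each $w_{j}=0$; hence $(v,w,h)=0$ and the combination is trivial.

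Consequently the restriction of $\Psi$ to the smooth subspace is surjective onto $\mathbb{C}^{d}$, so I can choose smooth vectors $G_{1},\ldots,G_{d}\in C^{\infty}(\overline{\Omega})\times(C^{\infty}(\Gamma))^{q}$ with $\Psi G_{k}$ equal to the standard basis of $\mathbb{C}^{d}$, and set $G:=\mathrm{span}\{G_{1},\ldots,G_{d}\}$. Then $\dim G=\dim N_{\star}$, $G\subset C^{\infty}(\overline{\Omega})\times(C^{\infty}(\Gamma))^{q}$, and $\Psi\!\upharpoonright\!G$ is bijective, so \eqref{3f17} follows from the abstract fact with $F:=G$. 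Finally, $G$ is independent of $s$ and $\varphi$: the vectors $G_{k}$ are fixed smooth objects belonging to every $\mathcal{H}^{s-2q,\varphi,(r-2q)}(\Omega,\Gamma)$, and each entry $L_{i}(G_{k})$ reduces to ordinary $L_{2}(\Omega)$- and $L_{2}(\Gamma)$-inner products of smooth functions, which do not change with $s$ or $\varphi$; hence the same $G$ serves simultaneously on the whole scale. As noted, the only genuinely delicate step is the linear independence of the $L_{i}$, together with the justification that the normal traces of a smooth function may be prescribed independently; the remaining arguments are the two routine applications of the finite-dimensional complement lemma.
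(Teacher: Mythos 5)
Your argument is correct, but for decomposition \eqref{3f17} it takes a genuinely different route from the paper's. The paper works first in the single case $s=r$, $\varphi\equiv1$: there the second summand of \eqref{3f17} is, by Theorem~\ref{3th1}, the closed range of the Fredholm operator \eqref{3f14}, so one takes its finite-dimensional orthogonal complement $G_{0}$ and then invokes the Gohberg--Krein lemma \cite{GohbergKrein60} to replace $G_{0}$ by a complement $G$ lying in the dense subset $C^{\infty}(\overline{\Omega})\times(C^{\infty}(\Gamma))^{q}$; the passage to general $(s,\varphi)$ is then a two-line codimension count, again via Theorem~\ref{3th1}. You instead build $G$ by hand as a dual family to the functionals $L_{1},\ldots,L_{d}$, and the burden shifts to the linear independence of these functionals, which you establish by successive testing (first $f=0$; then $f\in C^{\infty}_{0}(\Omega)$ with $g=0$; then prescribing the normal traces of $f$ in a collar). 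That argument is sound: it uses only the nondegeneracy of the pairings $(\cdot,\cdot)_{\Omega}$ and $(\cdot,\cdot)_{\Gamma}$ together with the standard Borel-type fact that finitely many normal traces of a smooth function may be prescribed independently, and it buys an independent, elementary verification that the codimension of the constraint set \eqref{3f15} equals exactly $\dim N_{\star}$ --- something the paper obtains only as a by-product of Theorem~\ref{3th1} and the underlying Sobolev-case results. The price is length and the need to justify the trace-prescription step, whereas the paper's proof is shorter because Theorem~\ref{3th1} is already available and carries the whole codimension count. Your treatment of \eqref{3f16} via the Gram-matrix projection onto $N$ is essentially the standard argument that the paper delegates to \cite{MikhailetsMurach14} and \cite{Roitberg96}.
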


\begin{remark}\label{3rem1}\rm
Let $\varphi\in\mathcal{M}$ and assume that $s<1/2+2q$. Then
$T_{r-2q}$ is an isometric isomorphism between the spaces $H^{s-2q,\varphi,(r-2q)}(\Omega)$ and
$\Pi_{s-2q,\varphi,(r-2q)}(\Omega,\Gamma)$. Therefore we can choose
\begin{align*}
G:=\bigl\{&(T_{r-2q}^{-1}(v,w_{1},\ldots,w_{r-2q}),h_{1},\ldots,h_{q})\\
&:(v,w_{1},\ldots,w_{r-2q},h_{1},\ldots,h_{q})\in N_{\star}\bigr\}
\end{align*}
in decomposition~\eqref{3f17}. Indeed, the chosen space $G$ and the second summand in~\eqref{3f17} have the trivial intersection, and $\dim G=\dim N_{\star}$
is equal to the codimension of the second summand in view of Theorem~\ref{3th1}. However, this choice is impossible for $s\geq1/2+2q$ because the space
\begin{equation*}
\{(v,w_{1},\ldots,w_{r-2q}):
(v,w_{1},\ldots,w_{r-2q},h_{1},\ldots,h_{q})\in N_{\star}\}
\end{equation*}
does not lie generally in $T_{r-2q}(H^{s-2q,\varphi,(r-2q)}(\Omega))$ for these~$s$.
\end{remark}

Let $P$ and $Q$ respectively denote the projectors of the spaces
\begin{equation*}
H^{s,\varphi,(r)}(\Omega)\quad\mbox{and}\quad
\mathcal{H}^{s-2q,\varphi,(r-2q)}(\Omega,\Gamma)
\end{equation*}
onto the second summands in \eqref{3f16} and \eqref{3f17} parallel to the first summands. Evidently, these projectors are independent of $s$ and $\varphi$.

\begin{theorem}\label{3th2}
For arbitrary $s\in\mathbb{R}$ and $\varphi\in\mathcal{M}$, the restriction of mapping~\eqref{3f14} to the subspace
$P(H^{s,\varphi,(r)}(\Omega))$ is an isomorphism
\begin{equation}\label{3f18}
(A,B):\,P(H^{s,\varphi,(r)}(\Omega))\leftrightarrow
Q(\mathcal{H}^{s-2q,\varphi,(r-2q)}(\Omega,\Gamma)).
\end{equation}
\end{theorem}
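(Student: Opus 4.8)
The plan is to deduce Theorem~\ref{3th2} directly from Theorem~\ref{3th1} and Lemma~\ref{3lem1} by a soft functional-analytic argument, the open mapping theorem doing the real work. First I would record the meaning of the two subspaces appearing in \eqref{3f18}. By the definition of the projectors $P$ and $Q$, the subspace $P(H^{s,\varphi,(r)}(\Omega))$ coincides with the second summand in \eqref{3f16}, i.e.\ with a topological complement of the kernel $N=\ker(A,B)$ furnished by that decomposition, whereas $Q(\mathcal{H}^{s-2q,\varphi,(r-2q)}(\Omega,\Gamma))$ coincides with the second summand in \eqref{3f17}, which by Theorem~\ref{3th1} is precisely the range of the operator \eqref{3f14}. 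Thus the assertion reduces to the general fact that the restriction of a bounded Fredholm operator to a topological complement of its kernel is a homeomorphism onto its (closed) range.

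Next I would verify that the restriction of \eqref{3f14} to $P(H^{s,\varphi,(r)}(\Omega))$ is a bijection onto $Q(\mathcal{H}^{s-2q,\varphi,(r-2q)}(\Omega,\Gamma))$. Injectivity is immediate: if $u\in P(H^{s,\varphi,(r)}(\Omega))$ satisfies $(A,B)u=0$, then $u\in N$ by Theorem~\ref{3th1}, so $u$ lies in the intersection of the two summands in \eqref{3f16}, which is trivial; hence $u=0$. For surjectivity, I would write an arbitrary $u\in H^{s,\varphi,(r)}(\Omega)$ as $u=u_{N}+Pu$ with $u_{N}\in N$; since $(A,B)u_{N}=0$, we get $(A,B)u=(A,B)Pu$, so that the image of the restriction equals the whole range of \eqref{3f14}, that is, $Q(\mathcal{H}^{s-2q,\varphi,(r-2q)}(\Omega,\Gamma))$.

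Finally I would invoke the open mapping theorem. Both subspaces are Banach spaces: $P(H^{s,\varphi,(r)}(\Omega))$ is closed because $P$ is a bounded projector (the decomposition \eqref{3f16} is topological, $N$ being finite-dimensional, so its complement is the kernel of the bounded operator $I-P$), and $Q(\mathcal{H}^{s-2q,\varphi,(r-2q)}(\Omega,\Gamma))$ is closed because it is the range of the Fredholm operator \eqref{3f14}, which is closed as noted after Theorem~\ref{3th1}. A bounded linear bijection between Banach spaces has a bounded inverse, and therefore \eqref{3f18} is an isomorphism. I do not anticipate a genuine obstacle in this step: all the analytic difficulty has already been absorbed into Theorem~\ref{3th1} and Lemma~\ref{3lem1}, and what remains is only the bounded-inverse theorem applied to the restriction of $(A,B)$.
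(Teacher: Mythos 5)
Your proposal is correct and follows essentially the same route as the paper: the paper's proof likewise observes that, by Theorem~\ref{3th1}, the restricted operator \eqref{3f18} is a bounded bijection between the indicated subspaces and then concludes by the Banach bounded-inverse theorem. Your version merely spells out the injectivity, surjectivity, and closedness checks that the paper leaves implicit.
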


This statement is a theorem on a complete collection of isomorphisms generated by the elliptic problem under consideration in H\"ormander--Roitberg spaces. This collection is complete in the sense that the number parameter $s$ ranges over the whole real axis.

Let us consider some important properties of the generalized solutions to the elliptic boundary-value problem \eqref{3f1}, \eqref{3f2}. Assume that
$$
(f,g):=(f,g_1,\ldots,g_q)\in H^{-\infty,(r-2q)}(\Omega)\times
(\mathcal{D}'(\Gamma))^{q}.
$$
A vector $u\in H^{-\infty,(r)}(\Omega)$ is called a (strong) generalized solution (in the sense of Roitberg) of this problem if $(A,B)u=(f,g)$, where $(A,B)$ is operator \eqref{3f14} for some parameters $s\in\mathbb{R}$ and $\varphi\in\mathcal{M}$. Of course, this definition is reasonable, i.e. it does not depend on $s$ and~$\varphi$. Besides, remark that
$$
H^{-\infty,(r-2q)}(\Omega)\times(\mathcal{D}'(\Gamma))^{q}=
\bigcup_{s\in\mathbb{R},\,\varphi\in\mathcal{M}}
\mathcal{H}^{s,\varphi,(r-2q)}(\Omega).
$$

\begin{theorem}\label{3th3}
Let $s\in\mathbb{R}$, $\varphi\in\mathcal{M}$, and a real number $\sigma>0$. Suppose that functions $\chi,\eta\in C^{\infty}(\overline{\Omega})$ satisfy the condition $\eta=1$ in a neighbourhood of \,$\mathrm{supp}\,\chi$. Then there exists a number $c=c(s,\varphi,\sigma,\chi,\eta)>0$ that an arbitrary vector
$u\in\nobreak H^{s,\varphi,(r)}(\Omega)$ satisfies the estimate
\begin{equation}\label{3f19}
\|\chi u\|_{H^{s,\varphi,(r)}(\Omega)}\leq c\,\bigl(\|\eta(A, B)u\|_{\mathcal{H}^{s-2q,\varphi,(r-2q)}(\Omega,
\Gamma)}+\|\eta u\|_{H^{s-\sigma,\varphi,(r)}(\Omega)}\bigr).
\end{equation}
Here, $c$ does not depend on $u$. If $\sigma\leq1$, we may replace $\eta$ with $\chi$ in the first summand on the right of \eqref{3f19}.
\end{theorem}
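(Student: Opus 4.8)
The plan is to deduce the local estimate \eqref{3f19} from a global \emph{a priori} estimate for the operator \eqref{3f14} and then to propagate it to the localized norm by a commutator argument, iterating a one-step localization when $\sigma>1$. The first ingredient is the global estimate
\[
\|u\|_{H^{s,\varphi,(r)}(\Omega)}\leq c\bigl(\|(A,B)u\|_{\mathcal{H}^{s-2q,\varphi,(r-2q)}(\Omega,\Gamma)}+\|u\|_{H^{s-\sigma,\varphi,(r)}(\Omega)}\bigr),
\]
valid for every $\sigma>0$ with $c$ independent of $u\in H^{s,\varphi,(r)}(\Omega)$. This follows from Theorem~\ref{3th1} by a standard Peetre/normal-family argument: since operator \eqref{3f14} is Fredholm, its kernel $N$ is finite-dimensional and its range is closed, and the embedding $H^{s,\varphi,(r)}(\Omega)\hookrightarrow H^{s-\sigma,\varphi,(r)}(\Omega)$ is compact for each $\sigma>0$ by \eqref{3f13}; complementing $N$ and running the usual contradiction argument yields the displayed inequality.

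Next I would prove the one-step localization. Applying the global estimate to $\chi u$ with $\sigma$ replaced by $1$, and using the Leibniz identities $A(\chi u)=\chi Au+[A,\chi]u$ and $B_j(\chi u)=\chi B_ju+[B_j,\chi]u$ (valid on $C^{\infty}(\overline{\Omega})$ and extended by continuity, with $\chi$ acting on the components of $(A,B)u$ as a bounded multiplier by Proposition~\ref{3prop5}), I obtain
\[
\|\chi u\|_{H^{s,\varphi,(r)}(\Omega)}\leq c\bigl(\|\chi(A,B)u\|_{\mathcal{H}^{s-2q,\varphi,(r-2q)}(\Omega,\Gamma)}+\|\eta u\|_{H^{s-1,\varphi,(r)}(\Omega)}\bigr).
\]
Two facts are used here. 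Since $\eta\equiv1$ near $\mathrm{supp}\,\chi$ and the coefficients of each commutator are supported in $\mathrm{supp}\,\chi$, one has $[A,\chi]u=[A,\chi](\eta u)$ and $[B_j,\chi]u=[B_j,\chi](\eta u)$. Moreover each commutator has order one lower than the corresponding operator, so by Proposition~\ref{3prop5} and Remark~\ref{3rem0} the collection $\mathcal{C}:=([A,\chi],[B_1,\chi],\dots,[B_q,\chi])$ maps $H^{s-1,\varphi,(r)}(\Omega)$ boundedly into $\mathcal{H}^{s-2q,\varphi,(r-2q)}(\Omega,\Gamma)$; that is, the commutator term gains one unit of the smoothness index $s$, and together with the Peetre remainder $\|\chi u\|_{H^{s-1,\varphi,(r)}(\Omega)}\le c\|\eta u\|_{H^{s-1,\varphi,(r)}(\Omega)}$ it produces the second summand above.

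For $\sigma\leq1$ this already finishes the proof: the inequality $s-1\leq s-\sigma$ gives $\|\eta u\|_{H^{s-1,\varphi,(r)}(\Omega)}\leq c\|\eta u\|_{H^{s-\sigma,\varphi,(r)}(\Omega)}$, and the main term is kept in terms of $\chi$, which yields \eqref{3f19} with $\chi$ in place of $\eta$ in the first summand. For general $\sigma>1$ I would iterate. Fixing an integer $M\geq\sigma-1$, I choose cut-off functions $\chi=\chi_0,\chi_1,\dots,\chi_M,\chi_{M+1}:=\eta$ in $C^{\infty}(\overline{\Omega})$ with $\chi_{i+1}\equiv1$ near $\mathrm{supp}\,\chi_i$ and $\mathrm{supp}\,\chi_i$ contained in the set where $\eta\equiv1$. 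Applying the one-step estimate to $\chi_i u$ at the regularity level $s-i$ gives
\[
\|\chi_i u\|_{H^{s-i,\varphi,(r)}(\Omega)}\leq c\bigl(\|\eta(A,B)u\|_{\mathcal{H}^{s-2q,\varphi,(r-2q)}(\Omega,\Gamma)}+\|\chi_{i+1}u\|_{H^{s-i-1,\varphi,(r)}(\Omega)}\bigr),
\]
where $\|\chi_i(A,B)u\|$ at level $s-i-2q$ was absorbed into $\|\eta(A,B)u\|$ at level $s-2q$ using $\chi_i=\chi_i\eta$ and $s-i-2q\leq s-2q$. Chaining these $M+1$ inequalities telescopes the main terms to $\|\eta(A,B)u\|_{\mathcal{H}^{s-2q,\varphi,(r-2q)}(\Omega,\Gamma)}$ and leaves the remainder $\|\eta u\|_{H^{s-M-1,\varphi,(r)}(\Omega)}$, which is dominated by $\|\eta u\|_{H^{s-\sigma,\varphi,(r)}(\Omega)}$ because $s-M-1\leq s-\sigma$. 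This proves \eqref{3f19}.

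I expect the main obstacle to be the careful bookkeeping of the Roitberg modification indices in the commutator estimates: one must check that $[A,\chi]$ (order $\leq 2q-1$) and $[B_j,\chi]$ (order $\leq m_j-1$) carry $H^{s-1,\varphi,(r)}(\Omega)$ into the target spaces $H^{s-2q,\varphi,(r-2q)}(\Omega)$ and $H^{s-m_j-1/2,\varphi}(\Gamma)$ respectively, i.e. that the drop of the modification index forced by the lower order of the commutators is compatible with Remark~\ref{3rem0} and the embedding \eqref{3f13}, so that a genuine gain of one unit in $s$ is realized. The support identity $[A,\chi]u=[A,\chi](\eta u)$ and its extension by continuity from $C^{\infty}(\overline{\Omega})$ to arbitrary $u\in H^{s,\varphi,(r)}(\Omega)$ will also require a short justification.
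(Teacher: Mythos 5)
Your proposal is correct and follows essentially the same route as the paper's proof: the global estimate via Peetre's lemma from the Fredholm property and the compact embedding \eqref{3f13}, the one-step localization through the commutator identity $(A,B)(\chi u)=\chi(A,B)u+(A',B')(\eta u)$ with the lower-order operators controlled by Proposition~\ref{3prop5} and Remark~\ref{3rem0}, and then iteration over integer steps (the paper phrases this as an induction on $\sigma\in\mathbb{Z}$ with a single intermediate cut-off per step, while you telescope a finite chain of cut-offs, which is only a cosmetic difference).
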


As to this theorem, we note first that the multiplication by $\eta\in C^{\infty}(\overline{\Omega})$ or $\eta\!\upharpoonright\!\Gamma$ is a bounded operator on every space $H^{\theta,\varphi,(k)}(\Omega)$ or $H^{\theta,\varphi}(\Gamma)$, where $\theta\in\mathbb{R}$, $\varphi\in\mathcal{M}$, and $1\leq k\in\mathbb{Z}$; see \cite[Theorem 4.13(i) and Lemma~2.5]{MikhailetsMurach14}. Hence, all terms of \eqref{3f19} are well defined and finite, we naturally interpreting $\eta(A, B)u$ as $(\eta Au,(\eta\!\upharpoonright\!\Gamma)B_{1}u,\ldots,
(\eta\!\upharpoonright\!\Gamma)B_{q}u)$.

In the case of $\chi(\cdot)\equiv\eta(\cdot)\equiv1$, inequality \eqref{3f19} is a global \textit{a priori} estimate of the generalized solution $u$ to the elliptic boundary-value problem \eqref{3f1}, \eqref{3f2}. In the general situation, this inequality is a local \textit{a priori} estimate (up to $\Gamma$) of~$u$. Indeed, for every nonempty open subset of $\overline{\Omega}$, we can choose functions $\chi$ and $\eta$ which satisfy the condition of Theorem~\ref{3th3} and whose supports lie in this subset.

Now we pay our attention to a local regularity up to $\Gamma$ of the generalized solutions. Let $V$ be an arbitrary open subset of $\mathbb{R}^{n}$ that has a nonempty intersection with the domain~$\Omega$. We set $\Omega_0:=\Omega\cap V$ and $\Gamma_{0}:=\Gamma\cap V$ (the case of $\Gamma_{0}=\emptyset$ is possible). Let us introduce local analogs of the spaces $H^{s,\varphi,(k)}(\Omega)$ and $H^{s,\varphi}(\Gamma)$, with $s\in\mathbb{R}$, $\varphi\in\mathcal{M}$, and $1\leq k\in\mathbb{Z}$. By definition, the space $H^{s,\varphi,(k)}_{\mathrm{loc}}(\Omega_{0},\Gamma_{0})$ consists of all vectors $u\in H^{-\infty,(k)}(\Omega)$ such that $\chi u\in H^{s,\varphi,(k)}(\Omega)$ for an arbitrary function $\chi\in C^{\infty}(\overline{\Omega})$ with $\mathrm{supp}\,\chi\subset\Omega_0\cup\Gamma_{0}$. Similarly, the space $H^{s,\varphi}_{\mathrm{loc}}(\Gamma_{0})$ consists of all distributions  $h\in\nobreak\mathcal{D}'(\Gamma)$ such that $\chi h\in H^{s,\varphi}(\Gamma)$ for every function $\chi\in C^{\infty}(\Gamma)$ with $\mathrm{supp}\,\chi\subset\Gamma_{0}$.
Put
\begin{equation*}
\mathcal{H}^{s-2q,\varphi,(r-2q)}_{\mathrm{loc}}(\Omega_{0},\Gamma_{0}):=
H^{s-2q,\varphi,(r-2q)}_{\mathrm{loc}}(\Omega_{0},\Gamma_{0})\times
\prod_{j=1}^{q}H^{s-m_{j}-1/2,\varphi}_{\mathrm{loc}}(\Gamma_{0}).
\end{equation*}

\begin{theorem}\label{3th4}
Suppose that $u\in H^{-\infty,(r)}(\Omega)$ is a generalized solution to the elliptic boundary-value problem \eqref{3f1}, \eqref{3f2} in which
\begin{equation*}
(f,g)\in
\mathcal{H}^{s-2q,\varphi,(r-2q)}_{\mathrm{loc}}(\Omega_{0},\Gamma_{0})
\end{equation*}
for some parameters $s\in\mathbb{R}$ and $\varphi\in\mathcal{M}$. Then $u\in H^{s,\varphi,(r)}_{\mathrm{loc}}(\Omega_{0},\Gamma_{0})$.
\end{theorem}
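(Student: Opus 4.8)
The plan is to establish a single local improvement-of-regularity step and then iterate it. Because $u\in H^{-\infty,(r)}(\Omega)$, we may fix $\sigma_{0}\in\mathbb{R}$ with $u\in H^{\sigma_{0},(r)}(\Omega)$; decreasing $\sigma_{0}$ (which only enlarges the space, by the embeddings \eqref{3f13}) we arrange that $\sigma_{0}<s-1$ and that $s-\sigma_{0}\notin\mathbb{Z}$, so the borderline index $s-1$ is never hit along the progression $\sigma_{0},\sigma_{0}+1,\sigma_{0}+2,\ldots$. It suffices to prove that $\chi u\in H^{s,\varphi,(r)}(\Omega)$ for an arbitrary $\chi\in C^{\infty}(\overline{\Omega})$ with $\mathrm{supp}\,\chi\subset\Omega_{0}\cup\Gamma_{0}$. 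To this end we choose a finite chain of functions $\chi_{0},\chi_{1},\ldots,\chi_{p}=\chi$ in $C^{\infty}(\overline{\Omega})$, all supported in $\Omega_{0}\cup\Gamma_{0}$ and such that $\chi_{j-1}=1$ in a neighbourhood of $\mathrm{supp}\,\chi_{j}$; such a chain exists because $\mathrm{supp}\,\chi$ is compact in the relatively open set $\Omega_{0}\cup\Gamma_{0}$. We start from $\chi_{0}u\in H^{\sigma_{0},(r)}(\Omega)$ (Proposition~\ref{3prop5} with $\ell=0$) and improve the regularity along the chain.

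The single step reads as follows: if $\eta,\chi\in C^{\infty}(\overline{\Omega})$ are supported in $\Omega_{0}\cup\Gamma_{0}$, if $\eta=1$ near $\mathrm{supp}\,\chi$, and if $\eta u\in H^{\theta,(r)}(\Omega)$ with $\theta\neq s-1$, then $\chi u\in H^{\theta+1,(r)}(\Omega)$ when $\theta<s-1$ and $\chi u\in H^{s,\varphi,(r)}(\Omega)$ when $\theta>s-1$. To prove it, set $w:=\chi u=\chi(\eta u)$, so $w\in H^{\theta,(r)}(\Omega)$ by Proposition~\ref{3prop5}. The Leibniz rule gives $A(\chi u)=\chi\,Au+[A,\chi]u$ and $B_{j}(\chi u)=\chi\,B_{j}u+[B_{j},\chi]u$, where $[A,\chi]$ and $[B_{j},\chi]$ are differential operators of orders at most $2q-1$ and $m_{j}-1$ whose coefficients vanish outside $\{\eta=1\}$; hence $[A,\chi]u=[A,\chi](\eta u)$ and $[B_{j},\chi]u=[B_{j},\chi](\eta u)$. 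Applying Proposition~\ref{3prop5} together with Remark~\ref{3rem0} to these commutators acting on $\eta u\in H^{\theta,(r)}(\Omega)$, we see that the commutator contributions lie in $\mathcal{H}^{\theta-2q+1,(r-2q)}(\Omega,\Gamma)$ (the choice of the reduced Roitberg index $r-2q$ being exactly what Remark~\ref{3rem0} provides, while the boundary orders match the slots $H^{\theta-m_{j}+1/2}(\Gamma)$ of this space). The remaining terms $\chi\,Au=\chi f$ and $\chi\,B_{j}u=\chi g_{j}$ belong to $\mathcal{H}^{s-2q,\varphi,(r-2q)}(\Omega,\Gamma)$, since $(f,g)$ is locally of this class and $\mathrm{supp}\,\chi\subset\Omega_{0}\cup\Gamma_{0}$. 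Using the embeddings \eqref{3f10} and \eqref{3f13} together with $\theta\neq s-1$, we conclude that $F:=(A,B)w$ lies in $\mathcal{H}^{\theta-2q+1,(r-2q)}(\Omega,\Gamma)$ if $\theta<s-1$ and in $\mathcal{H}^{s-2q,\varphi,(r-2q)}(\Omega,\Gamma)$ if $\theta>s-1$, one summand being absorbed into the other.

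It remains to carry this regularity of $F$ back to $w$, and this is the crux of the argument. Write $(\theta',\psi):=(\theta+1,1)$ in the first case and $(\theta',\psi):=(s,\varphi)$ in the second, so that $F\in\mathcal{H}^{\theta'-2q,\psi,(r-2q)}(\Omega,\Gamma)$ and, by \eqref{3f13} (note $\theta'>\theta$ in both cases), $H^{\theta',\psi,(r)}(\Omega)\hookrightarrow H^{\theta,(r)}(\Omega)$. The key point is that $F=(A,B)w$ lies in the range of the operator \eqref{3f14} taken for the parameters $(\theta,1)$; by Theorem~\ref{3th1} it therefore satisfies the orthogonality conditions \eqref{3f15}. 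Since the space $N_{\star}$, the mapping $T_{r-2q}$, and hence the conditions \eqref{3f15} do not depend on the regularity parameters, the same vector $F$ satisfies \eqref{3f15} when regarded as an element of $\mathcal{H}^{\theta'-2q,\psi,(r-2q)}(\Omega,\Gamma)$; by Theorem~\ref{3th1} again, $F$ lies in the range of the operator \eqref{3f14} taken for the parameters $(\theta',\psi)$. Hence there is $w'\in H^{\theta',\psi,(r)}(\Omega)$ with $(A,B)w'=F=(A,B)w$, whence $w-w'$ belongs to the kernel $N\subset C^{\infty}(\overline{\Omega})$ of \eqref{3f14}; as $N\subset H^{\theta',\psi,(r)}(\Omega)$, we obtain $w=w'+(w-w')\in H^{\theta',\psi,(r)}(\Omega)$, which is the asserted improvement.

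Iterating this step along the chain, the index increases by $1$ until it first exceeds $s-1$ (which happens after finitely many steps, as $\sigma_{0}<s-1$); at that step the index lies in $(s-1,s)$ and the step yields $\chi u=\chi_{p}u\in H^{s,\varphi,(r)}(\Omega)$. As $\chi$ was arbitrary, this proves $u\in H^{s,\varphi,(r)}_{\mathrm{loc}}(\Omega_{0},\Gamma_{0})$. I expect the main obstacle to be the regularity-transfer step of the third paragraph, which relies essentially on the invariance of the kernel $N$, of the defect space $N_{\star}$, and of the range-describing conditions \eqref{3f15} across the whole two-sided scale; a secondary technical difficulty is the precise bookkeeping of the Roitberg indices for the commutators (where Remark~\ref{3rem0} is indispensable) and the accurate capture of the function parameter $\varphi$ at the final step, which is the reason for imposing the genericity condition $s-\sigma_{0}\notin\mathbb{Z}$.
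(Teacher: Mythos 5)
Your proof is correct and follows essentially the same route as the paper's: the Leibniz/commutator identity with nested cutoffs, the transfer of regularity through the parameter-independent kernel $N$ and range description \eqref{3f15} provided by Theorem~\ref{3th1}, and iteration of a one-step improvement. The only cosmetic difference is that the paper carries the function parameter $\varphi$ in every intermediate space $H^{s-k,\varphi,(r)}(\Omega)$ (first proving the global case $\Omega_{0}=\Omega$, $\Gamma_{0}=\Gamma$ separately), which makes the borderline case $\theta=s-1$ and your genericity condition $s-\sigma_{0}\notin\mathbb{Z}$ unnecessary.
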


If $\Omega_{0}=\Omega$ and $\Gamma_{0}=\Gamma$, then the equalities  $H^{s,\varphi,(r)}_{\mathrm{loc}}(\Omega_{0},\Gamma_{0})=
H^{s,\varphi,(r)}(\Omega)$ and $\mathcal{H}^{s-2q,\varphi,(r-2q)}_{\mathrm{loc}}(\Omega_{0},\Gamma_{0})=
\mathcal{H}^{s-2q,\varphi,(r-2q)}(\Omega,\Gamma)$ hold true. Therefore, Theorem~\ref{3th4} states in this case that the regularity of $u$ increases globally, i.e. on the whole closed domain~$\overline{\Omega}$. If $\Gamma_{0}=\emptyset$, this theorem becomes an assertion about increase in local smoothness of $u$ in neighbourhoods of inner points of $\overline{\Omega}$.

In the Sobolev case of $\varphi(t)\equiv1$, Theorems \ref{3th1}, \ref{3th2}, \ref{3th3}, and \ref{3th4} are proved in Roitberg's monographs \cite[Chapter 4 and Section~7.2]{Roitberg96}. However, Roitberg does not use the formally adjoint problem \eqref{3f4}, \eqref{3f5} for the description of the range of the Fredholm operator~\eqref{3f14}. This is done in Kozlov, Maz'ya and Rossmann's monograph \cite[Section~4.1]{KozlovMazyaRossmann97} for elliptic problems with additional unknown functions in boundary conditions (see also \cite[Subsection~2.7.4]{Roitberg99}). Note that Isomorphism Theorem~\ref{3th2} in the Sobolev case was established by Kostarchuk and Roitberg \cite[Theorem~5]{KostarchukRoitberg73} (without involving the formally adjoint problem). Apparently, Vainberg and Grushin \cite[Section~4]{VainbergGrushin67b}  showed first that it is necessary to use the expression of the form
\begin{equation*}
\sum_{j=1}^{r-2q}(f_{j},w_{j})_{\Gamma}
\end{equation*}
in the description \eqref{3f15} of the domain of $(A,B)$ in the $m\geq2q$ case under consideration. Elliptic problems with additional unknown functions in boundary conditions, specifically the formally adjoint problem \eqref{3f4}, \eqref{3f5}, are investigated in H\"ormander spaces in \cite{ChepurukhinaMurach15MFAT, MurachChepurukhina15UMJ}, the orders of boundary conditions being less than $2q$.

As an application of Theorem~\ref{3th4}, we will obtain sufficient conditions under which the generalized solution $u\in H^{-\infty,(r)}(\Omega)$ yields a function from $C^p(\Omega_{0}\cup\Gamma_{0})$ for some integer $p\geq0$. It is naturally to admit the following agreement for $u\in H^{-\infty,(r)}(\Omega)$: in the case of $\Gamma_{0}\neq\emptyset$ we write $u\in C^{p}(\Omega_{0}\cup\Gamma_{0})$ if an only if $\chi u\in H^{r-1/2+}(\Omega)\cap C^p(\overline{\Omega})$ for every function $\chi\in C^{\infty}(\overline{\Omega})$ such that $\mathrm{supp}\,\chi\subset\Omega_{0}\cup\Gamma_{0}$. Note the inclusion $\chi u\in H^{r-1/2+}(\Omega)$ ensures that $\chi u$ is a distribution in $\Omega$, for which the condition $\chi u\in C^p(\overline{\Omega})$ makes sense. Besides, considering the case of $\Gamma_{0}=\emptyset$, we write $u\in C^{p}(\Omega_{0})$ if and only if $(\chi u)_{0}\in H^{-1/2+}(\Omega)\cap C^{p}(\Omega)$ for every function $\chi\in C^{\infty}(\overline{\Omega})$ such that $\mathrm{supp}\,\chi\subset\Omega_{0}$. Here, $(\chi u)_{0}$ denotes the initial component of the vector $T_{r}(\chi u)$, whereas $T_{r}$ is the operator introduced in Section~\ref{sec3}.

\begin{theorem}\label{3th5}
Let $0\leq p\in\mathbb{Z}$, and assume that $p>r-(n+1)/2$ in the case of $\Gamma_{0}\neq\emptyset$. Suppose that the condition of Theorem~$\ref{3th4}$ is fulfilled for $s:=p+n/2$ and a certain function $\varphi\in\mathcal{M}$ subject to
\begin{equation}\label{3f20a}
\int\limits_{1}^{\infty}\frac{dt}{t\,\varphi^{2}(t)}<\infty.
\end{equation}
Then $u\in C^{p}(\Omega_{0}\cup\Gamma_{0})$.
\end{theorem}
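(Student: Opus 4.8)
The plan is to obtain the assertion by combining the local regularity Theorem~\ref{3th4} with a Sobolev-type embedding of H\"ormander spaces into classes of continuously differentiable functions; the Dini-type condition~\eqref{3f20a} is precisely what forces the \emph{limiting} embedding $H^{p+n/2,\varphi}(\mathbb{R}^{n})\hookrightarrow C^{p}_{b}(\mathbb{R}^{n})$ to hold. First I would apply Theorem~\ref{3th4} with $s:=p+n/2$ and the prescribed $\varphi$: since its hypotheses are assumed, it yields $u\in H^{s,\varphi,(r)}_{\mathrm{loc}}(\Omega_{0},\Gamma_{0})$, so that $\chi u\in H^{s,\varphi,(r)}(\Omega)$ for every $\chi\in C^{\infty}(\overline{\Omega})$ with $\mathrm{supp}\,\chi\subset\Omega_{0}\cup\Gamma_{0}$. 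After this step the problem is reduced to a purely functional-analytic embedding statement. Note that the crude sandwich~\eqref{3f7} together with the classical Sobolev embedding would only give $C^{p}$-regularity for $s>p+n/2$; gaining the endpoint $s=p+n/2$ is exactly what the function parameter $\varphi$ and condition~\eqref{3f20a} provide.

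The heart of the matter is the embedding $H^{s,\varphi}(\mathbb{R}^{n})\hookrightarrow C^{p}_{b}(\mathbb{R}^{n})$ for $s=p+n/2$, which I would prove by a Fourier-analytic argument. For $w\in H^{s,\varphi}(\mathbb{R}^{n})$ and a multi-index $\alpha$ with $|\alpha|\leq p$, the Cauchy--Schwarz inequality gives
\begin{equation*}
\int_{\mathbb{R}^{n}}|\xi^{\alpha}\widehat{w}(\xi)|\,d\xi\leq
\|w\|_{H^{s,\varphi}(\mathbb{R}^{n})}
\left(\int_{\mathbb{R}^{n}}
\frac{\langle\xi\rangle^{2|\alpha|}}
{\langle\xi\rangle^{2s}\varphi^{2}(\langle\xi\rangle)}\,d\xi\right)^{1/2},
\end{equation*}
and for $s=p+n/2\geq|\alpha|+n/2$ the last integral is dominated by a constant multiple of $\int_{\mathbb{R}^{n}}\langle\xi\rangle^{-n}\varphi^{-2}(\langle\xi\rangle)\,d\xi$, which after passing to spherical coordinates reduces to the convergent integral~\eqref{3f20a}. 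Hence $\xi^{\alpha}\widehat{w}\in L_{1}(\mathbb{R}^{n})$ for all $|\alpha|\leq p$, so each $D^{\alpha}w$ is a bounded continuous function and the embedding holds continuously. Transferring it to the domain by a bounded extension operator $H^{s,\varphi}(\Omega)\to H^{s,\varphi}(\mathbb{R}^{n})$ (which exists since $\Gamma$ is $C^{\infty}$) gives $H^{s,\varphi}(\Omega)\hookrightarrow C^{p}(\overline{\Omega})$.

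It then remains to match these embeddings against the two conventions for $u\in C^{p}$ adopted just before the theorem. If $\Gamma_{0}\neq\emptyset$, the hypothesis $p>r-(n+1)/2$ is exactly equivalent to $s=p+n/2>r-1/2$, so~\eqref{3f11b} gives $\chi u\in H^{s,\varphi,(r)}(\Omega)=H^{s,\varphi}(\Omega)\subset H^{r-1/2+}(\Omega)$; the embedding above then yields $\chi u\in C^{p}(\overline{\Omega})$, which is the stipulated meaning of $u\in C^{p}(\Omega_{0}\cup\Gamma_{0})$. If $\Gamma_{0}=\emptyset$, then $\mathrm{supp}\,\chi$ is a compact subset of the open set $\Omega$, and I would work with the initial component $(\chi u)_{0}$ of $T_{r}(\chi u)$: since $s=p+n/2\geq0$ we have $(\chi u)_{0}\in H^{s,\varphi,(0)}(\Omega)=H^{s,\varphi}(\Omega)\subset H^{-1/2+}(\Omega)$, and because this component is supported in $\mathrm{supp}\,\chi$ it extends by zero to an element of $H^{s,\varphi}(\mathbb{R}^{n})$, whence the embedding gives $(\chi u)_{0}\in C^{p}(\Omega)$; this is the stipulated meaning of $u\in C^{p}(\Omega_{0})$.

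The main obstacle I anticipate is not the embedding computation, which is routine, but the careful bookkeeping needed to align the conclusion with the two conventions for $u\in C^{p}$: in particular, verifying that $p>r-(n+1)/2$ is precisely what collapses the Roitberg-modified space to the genuine H\"ormander space near $\Gamma$ (so that $\chi u$ is an honest distribution there), and handling correctly the compactly supported initial component $(\chi u)_{0}$ in the interior case $\Gamma_{0}=\emptyset$.
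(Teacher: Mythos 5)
Your proposal is correct and follows essentially the same route as the paper: apply Theorem~\ref{3th4} with $s=p+n/2$, use $p>r-(n+1)/2$ together with \eqref{3f11b} to collapse $H^{s,\varphi,(r)}(\Omega)$ to $H^{s,\varphi}(\Omega)$ when $\Gamma_{0}\neq\emptyset$ (and pass to the initial component $(\chi u)_{0}\in H^{s,\varphi,(0)}(\Omega)=H^{s,\varphi}(\Omega)$ when $\Gamma_{0}=\emptyset$), and then invoke the limiting embedding $H^{p+n/2,\varphi}(\Omega)\hookrightarrow C^{p}(\overline{\Omega})$ under \eqref{3f20a}. The only difference is that you prove this embedding directly by Cauchy--Schwarz on the Fourier side, whereas the paper simply cites it as a consequence of H\"ormander's embedding theorem (property \eqref{3f20}); your computation is a correct proof of that cited fact.
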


\begin{remark}\label{3rem3} \rm
Condition \eqref{3f20a} is sharp in Theorem~\ref{3th5}. This means the following: let $0\leq p\in\mathbb{Z}$ and $\varphi\in\mathcal{M}$; then it follows from the implication
\begin{equation}\label{3-implication}
\begin{aligned}
&\bigl(u\in H^{-\infty, (r)}(\Omega),\;(A,B)u\in
\mathcal{H}^{p+n/2-2q,\varphi,(r-2q)}_{\mathrm{loc}}(\Omega_{0},\Gamma_{0})
\bigr)\\
&\Longrightarrow\;u\in C^{p}(\Omega_{0}\cup\Gamma_{0})
\end{aligned}
\end{equation}
that $\varphi$ satisfies condition \eqref{3f20a}.
\end{remark}

If we stated a version of Theorem \ref{3th5} in the framework of Sobolev--Roitberg spaces, we would suppose that
\begin{equation*}
(f,g)\in\mathcal{H}^{s-2q,(r-2q)}_{\mathrm{loc}}(\Omega_{0},\Gamma_{0})
\quad\mbox{for some}\quad s>p+n/2.
\end{equation*}
This assumption is stronger than that made in Theorem~\ref{3th5} in terms of H\"ormander--Roitberg spaces.

We also give a sufficient condition under which the generalized solution $u$ to problem \eqref{3f1}, \eqref{3f2} is classical, i.e., $u\in C^{2q}(\Omega)\cap C^{m}(U_{\sigma}\cup \Gamma)$ for some $\sigma>0$, where $U_{\sigma}:=\bigl\{ x\in \Omega:\, \mathrm{dist}(x,\Gamma)<\sigma \bigr\}$. If the solution $u$ is classical, the left-hand sides of this problem are calculated with the help of classical partial derivatives and are continuous functions on $\Omega$ and $\Gamma$ respectively.

\begin{theorem}\label{3th6}
Suppose that a vector $u\in H^{-\infty,(r)}(\Omega)$ is a generalized solution to the elliptic boundary-value problem \eqref{3f1}, \eqref{3f2} in which
\begin{gather}\label{3f23}
f\in H^{n/2,\varphi_1,(r-2q)}_{\mathrm{loc}}(\Omega,\emptyset)\cap H^{m-2q+n/2,\varphi_2,(r-2q)}(U_{\sigma},\Gamma),\\
g_j\in H^{m-m_j+(n-1)/2,\varphi_2}(\Gamma),\quad\mbox{with}\quad
j=1,\ldots,q\label{3f24},
\end{gather}
for certain parameters $\varphi_{1},\varphi_{2}\in\mathcal{M}$ that satisfy condition \eqref{3f20a} with $\varphi:=\varphi_{1}$ and $\varphi:=\varphi_{2}$ respectively. Then $u$ is a classical solution.
\end{theorem}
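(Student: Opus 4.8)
The plan is to reduce Theorem~\ref{3th6} to two applications of Theorem~\ref{3th5}, one producing interior classical regularity and one producing classical regularity up to the boundary. Recall that, by the agreement preceding the theorem, $u$ is a classical solution precisely when $u\in C^{2q}(\Omega)\cap C^{m}(U_{\sigma}\cup\Gamma)$; thus it suffices to establish these two memberships separately, each via Theorem~\ref{3th5} with an appropriate choice of $p$, of the open set $V$, and of the function parameter $\varphi$.

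First I would treat the interior. Take $\Gamma_{0}:=\emptyset$ and $\Omega_{0}:=\Omega$, and set $p:=2q$ and $\varphi:=\varphi_{1}$, so that $s:=p+n/2=2q+n/2$ and hence $s-2q=n/2$. In the case $\Gamma_{0}=\emptyset$ no inequality $p>r-(n+1)/2$ is required, and the hypothesis of Theorem~\ref{3th4} reduces to $f\in H^{n/2,\varphi_{1},(r-2q)}_{\mathrm{loc}}(\Omega,\emptyset)$, which is exactly the first inclusion in \eqref{3f23}; the $g_{j}$-components impose no condition since $\Gamma_{0}=\emptyset$. As $\varphi_{1}$ satisfies \eqref{3f20a}, Theorem~\ref{3th5} yields $u\in C^{2q}(\Omega)$.

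Next I would obtain regularity up to $\Gamma$. I choose $V:=\{x\in\mathbb{R}^{n}:\mathrm{dist}(x,\Gamma)<\sigma\}$, so that $\Omega_{0}=\Omega\cap V=U_{\sigma}$ and $\Gamma_{0}=\Gamma$, and I set $p:=m$, $\varphi:=\varphi_{2}$, and $s:=m+n/2$. The admissibility constraint $p>r-(n+1)/2$ becomes $m>m+1-(n+1)/2$, i.e. $n>1$, which holds because $n\geq2$. The $f$-component of the hypothesis of Theorem~\ref{3th4} reads $f\in H^{m-2q+n/2,\varphi_{2},(r-2q)}_{\mathrm{loc}}(U_{\sigma},\Gamma)$, which follows from the second inclusion in \eqref{3f23}; the $g_{j}$-components require $g_{j}\in H^{m-m_{j}+(n-1)/2,\varphi_{2}}_{\mathrm{loc}}(\Gamma)$, since $s-m_{j}-1/2=m-m_{j}+(n-1)/2$, and this is guaranteed by \eqref{3f24} together with the embedding of the global space $H^{m-m_{j}+(n-1)/2,\varphi_{2}}(\Gamma)$ into the local space over $\Gamma_{0}=\Gamma$. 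As $\varphi_{2}$ also satisfies \eqref{3f20a}, Theorem~\ref{3th5} gives $u\in C^{m}(U_{\sigma}\cup\Gamma)$.

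Combining the two steps yields $u\in C^{2q}(\Omega)\cap C^{m}(U_{\sigma}\cup\Gamma)$, which is the desired conclusion. I expect no genuine obstacle: the substantive analytic work is already packaged in Theorem~\ref{3th5}, so the proof amounts to matching the smoothness exponents $s-2q$ and $s-m_{j}-1/2$ against the indices appearing in \eqref{3f23}--\eqref{3f24} and to checking the discrete constraint $p>r-(n+1)/2$. The only points demanding care are the correct choice of the collar neighbourhood $V$ so that $(\Omega_{0},\Gamma_{0})=(U_{\sigma},\Gamma)$, and the bookkeeping that converts the global hypotheses on the $g_{j}$ into the local hypotheses required by Theorem~\ref{3th4}.
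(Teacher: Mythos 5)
Your proposal is correct and follows essentially the same route as the paper: two applications of Theorem~\ref{3th5}, first with $p=2q$, $\varphi=\varphi_{1}$, $(\Omega_{0},\Gamma_{0})=(\Omega,\emptyset)$ and then with $p=m$, $\varphi=\varphi_{2}$, $(\Omega_{0},\Gamma_{0})=(U_{\sigma},\Gamma)$, including the verification that $p=m>r-(n+1)/2$ reduces to $n>1$. The exponent bookkeeping $s-2q=m-2q+n/2$ and $s-m_{j}-1/2=m-m_{j}+(n-1)/2$ matches the paper's hypotheses exactly.
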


We will prove Theorems \ref{3th1}, \ref{3th2}--\ref{3th5}, \ref{3th6} and Lemma~\ref{3lem1} and justify Remark~\ref{3rem3} in Section~\ref{sec6}.

\section{Interpolation between Hilbert spaces}\label{sec5}

The Hilbert spaces considered in Section~\ref{sec3} have an important interpolation property, which will play a key role in our proof of Theorem~1. Namely, every H\"ormander space $H^{s,\varphi}(G)$, where $s\in\mathbb{R}$, $\varphi\in\mathcal{M}$, and $G\in\{\mathbb{R}^{n},\Omega,\Gamma\}$, can be obtained by the interpolation with an appropriate function parameter between the Sobolev spaces $H^{s-\varepsilon}(G)$ and $H^{s+\delta}(G)$ with $\varepsilon,\delta>0$. A similar result holds true for the H\"ormander--Roitberg spaces $H^{s,\varphi,(k)}(\Omega)$.

Therefore we will recall the definition of the interpolation with a function parameter between Hilbert spaces. This interpolation method was introduced by Foia\c{s} and Lions \cite[p.~278]{FoiasLions61}. We restrict the presentation to the case of separable Hilbert spaces, which will be enough for our purposes. We follow monograph \cite[Section~1.1]{MikhailetsMurach14}.

Let $X:=[X_{0},X_{1}]$ be an ordered pair of separable complex Hilbert spaces that satisfy the following two conditions: $X_{1}$ is a dense manifold in $X_{0}$, and there is a number $c>0$ such that $\|w\|_{X_{0}}\leq c\,\|w\|_{X_{1}}$ for every $w\in X_{1}$ (briefly saying, the dense continuous embedding $X_{1}\hookrightarrow X_{0}$ holds). This pair is called admissible. As is known \cite[Chapter~1, Section~1]{LionsMagenes72}, for $X$ there exists a positive-definite self-adjoint operator $J$ on $X_{0}$ with the domain $X_{1}$ such that $\|Jw\|_{X_{0}}=\|w\|_{X_{1}}$ for every $w\in X_{1}$. This operator is uniquely determined by the pair $X$ and is called a generating operator for~$X$.

Let $\mathcal{B}$ denote the set of all Borel measurable functions
$\psi:(0,\infty)\rightarrow(0,\infty)$ such that $\psi$ is bounded on every compact interval $[a,b]$ with $0<a<b<\infty$ and that $1/\psi$ is bounded on every closed semiaxis $[r,\infty)$ with $r>0$.

Given $\psi\in\mathcal{B}$, we consider the operator $\psi(J)$, which is defined (and positive-definite) in $X_{0}$ as the Borel function $\psi$ of the positive-definite self-adjoint operator~$J$. Let $[X_{0},X_{1}]_{\psi}$ or, simply, $X_{\psi}$ denote the domain of the operator $\psi(J)$ endowed with the inner product
$(w_1, w_2)_{X_\psi}:=(\psi(J)w_1,\psi(J)w_2)_{X_0}$ and the
corresponding norm $\|w\|_{X_\psi}=(w,w)_{X_\psi}^{1/2}$. The space $X_{\psi}$ is Hilbert and separable. The continuous and dense embedding $X_\psi \hookrightarrow X_0$ holds true.

A function $\psi\in\mathcal{B}$ is called an interpolation parameter if and only if the following condition is fulfilled for all admissible pairs $X=[X_{0},X_{1}]$ and $Y=[Y_{0},Y_{1}]$ of Hilbert spaces and for an arbitrary linear mapping $T$ given on $X_{0}$: if the restriction of $T$ to $X_{j}$ is a bounded operator $T:X_{j}\rightarrow Y_{j}$ for each $j\in\{0,1\}$, the restriction of $T$ to $X_{\psi}$ is also a bounded operator $T:X_{\psi}\rightarrow Y_{\psi}$. If $\psi$ is an interpolation parameter, we say that the Hilbert space $X_{\psi}$ is obtained by the interpolation with the function parameter $\psi$ of the pair $X=[X_{0},X_{1}]$ (or between $X_{0}$ and $X_{1}$). We also say that the bounded operator $T:X_{\psi}\rightarrow Y_{\psi}$ is obtained by the interpolation with the parameter  $\psi$ of the bounded operators $T:X_{0}\rightarrow Y_{0}$ and $T:X_{1}\rightarrow Y_{1}$. In this case, the dense and continuous embeddings $X_{1}\hookrightarrow X_{\psi}\hookrightarrow X_{0}$ hold.

Note that a function $\psi\in\mathcal{B}$ is an interpolation parameter if and only if $\psi$ is pseudoconcave in a neighbourhood of $+\infty$ \cite[Section~1.1.9]{MikhailetsMurach14}. The latter condition means that there exists a concave function $\psi_{1}:(b,\infty)\rightarrow(0,\infty)$, where $b\gg1$, such that both functions $\psi/\psi_{1}$ and $\psi_{1}/\psi$ are bounded on $(b,\infty)$. This fundamental result follows from Peetre's theorem \cite{Peetre66, Peetre68} about  description of all interpolation functions of positive order (see also \cite[Section~5.4]{BerghLefstrem80}).

Specifically, the power function $\psi(t)\equiv t^{\theta}$ is an interpolation parameter if and only if $0\leq\theta\leq1$. In this case the exponent $\theta\in[0,1]$ is considered as a number parameter of the interpolation (see \cite[Chapter~IV, Section~1, Subsection~10]{KreinPetuninSemenov82} or \cite[Chapter~1, Sections 2 and 5]{LionsMagenes72}). This method of interpolation between spaces was historically first and belongs to Lions and S.~Krein. It is used in  definition \eqref{3fdef-interp} of certain H\"ormander--Roitberg spaces.

Let us formulate the above-mentioned interpolation property of the H\"ormander spaces and their modifications in the sense of Roitberg.

\begin{proposition}\label{3prop1}
Let a function $\varphi\in\mathcal{M}$ and positive real numbers $\varepsilon$ and $\delta$ be given. Define a function $\psi\in\mathcal{B}$ by the formula
\begin{gather*}
\psi(t)=\begin{cases}
t^{\varepsilon/(\varepsilon+\delta)}\varphi(t^{1/(\varepsilon+\delta)})&  \text{if}\;\;t\geq 1\\
\varphi(1)&\text{if}\;\;0<t<1.
\end{cases}
\end{gather*}
Then $\psi$ is an interpolation parameter, and
$$
\bigl[H^{s-\varepsilon}(G),H^{s+\delta}(G)\bigr]_{\psi}=H^{s,\varphi}(G)
\quad\mbox{for every}\quad s\in\mathbb{R}
$$
with equivalence of norms. Here, $G\in\{\mathbb{R}^{n},\Omega,\Gamma\}$; if $G=\mathbb{R}^{n}$, the equality of norms holds. Besides, let an integer $k\geq1$, and assume that at least one of the inequalities $s-\varepsilon>k-1/2$ and $s+\delta<k+1/2$ is satisfied in the case of odd $k$. Then
\begin{equation*}
\bigl[H^{s-\varepsilon,(k)}(\Omega),
H^{s+\delta,(k)}(\Omega)\bigr]_{\psi}
=H^{s,\varphi,(k)}(\Omega)\quad\mbox{for every}\quad s\in\mathbb{R}
\end{equation*}
with equivalence of norms.
\end{proposition}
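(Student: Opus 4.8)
The plan is to establish the equality for $G\in\{\mathbb{R}^n,\Omega,\Gamma\}$ first and then to bootstrap to the Roitberg spaces. I begin by checking that $\psi$ is an interpolation parameter. Since $\varphi\in\mathcal{M}$ is slowly varying and $t^{1/(\varepsilon+\delta)}\to\infty$, the map $t\mapsto\varphi(t^{1/(\varepsilon+\delta)})$ is again slowly varying, so for $t\geq1$ we have $\psi(t)=t^{\theta}\varphi(t^{1/(\varepsilon+\delta)})$ with $\theta:=\varepsilon/(\varepsilon+\delta)\in(0,1)$; thus $\psi$ is regularly varying of positive index and hence pseudoconcave near $+\infty$, which by the Peetre-type criterion recalled above makes it an interpolation parameter. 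For $G=\mathbb{R}^n$ I would compute directly: the generating operator $J$ of the pair $[H^{s-\varepsilon}(\mathbb{R}^n),H^{s+\delta}(\mathbb{R}^n)]$ is the Fourier multiplier with symbol $\langle\xi\rangle^{\varepsilon+\delta}$, so $\psi(J)$ is the multiplier with symbol $\psi(\langle\xi\rangle^{\varepsilon+\delta})=\langle\xi\rangle^{\varepsilon}\varphi(\langle\xi\rangle)$ (using $\langle\xi\rangle^{\varepsilon+\delta}\geq1$), and a one-line evaluation of $\|\psi(J)w\|_{H^{s-\varepsilon}(\mathbb{R}^n)}$ returns exactly $\|w\|_{H^{s,\varphi}(\mathbb{R}^n)}$, giving the asserted equality with equality of norms.

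Next I would transfer this to $\Omega$ and $\Gamma$. For $\Omega$ I use a linear extension operator bounded from $H^{\sigma}(\Omega)$ into $H^{\sigma}(\mathbb{R}^n)$ at the two endpoint exponents $\sigma=s-\varepsilon$ and $\sigma=s+\delta$, together with the restriction operator; these form a retraction--coretraction pair, and the interpolation property of $\psi$ (applied to each of the two operators) identifies $[H^{s-\varepsilon}(\Omega),H^{s+\delta}(\Omega)]_\psi$ with the restriction of $[H^{s-\varepsilon}(\mathbb{R}^n),H^{s+\delta}(\mathbb{R}^n)]_\psi=H^{s,\varphi}(\mathbb{R}^n)$, that is, with $H^{s,\varphi}(\Omega)$, up to equivalence of norms. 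For $\Gamma$ the same mechanism applies after passing to the fixed atlas and partition of unity that define $H^{s,\varphi}(\Gamma)$, reducing to the already-settled $\mathbb{R}^{n-1}$ case.

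For the Roitberg spaces the strategy is to push the interpolation through the operator $T_k$. At each endpoint $T_k$ is bounded into $\Pi_{\sigma,(k)}(\Omega,\Gamma)=H^{\sigma,(0)}(\Omega)\oplus\bigoplus_{j=1}^{k}H^{\sigma-j+1/2}(\Gamma)$, so by the interpolation property it is bounded on $[H^{s-\varepsilon,(k)}(\Omega),H^{s+\delta,(k)}(\Omega)]_\psi$ into the interpolation of the two target sums. Interpolation of a finite orthogonal direct sum equals the direct sum of the interpolated summands (the generating operator is block-diagonal, and so is its Borel function $\psi$), and each summand is covered by the previous step: the boundary summands yield $H^{s-j+1/2,\varphi}(\Gamma)$, while for the interior summand $H^{\sigma,(0)}(\Omega)$ I split into $\sigma\geq0$, where it equals $H^{\sigma}(\Omega)$ and the $\Omega$-step applies, and $\sigma<0$, where the isomorphism $\mathcal{O}$ onto $\{v\in H^{\sigma}(\mathbb{R}^n):\mathrm{supp}\,v\subseteq\overline{\Omega}\}$ reduces the interpolation to that of a closed subspace of $H^{\sigma,\varphi}(\mathbb{R}^n)$. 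When $s\notin E_k$ these identifications reassemble the norm defining $H^{s,\varphi,(k)}(\Omega)$; the remaining values $s\in E_k$ are then obtained by the reiteration theorem applied to the defining formula \eqref{3fdef-interp}.

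The hard part will be this last step, and specifically the bookkeeping of the range of $T_k$. For $\sigma\notin E_k$ this range is the subspace of $\Pi_{\sigma,(k)}(\Omega,\Gamma)$ cut out by the constraints $u_j=R_\Gamma D_\nu^{j-1}u_0$ for exactly those $j$ with $\sigma>j-1/2$, and this constraint set jumps each time $\sigma$ crosses a half-integer of $E_k$. Hence, if some value of $E_k$ lies strictly between $s-\varepsilon$ and $s+\delta$, the two endpoint spaces embed as genuinely different subspaces of their $\Pi$-spaces, and at a half-integer exponent the space is only defined through interpolation rather than realized as the full subspace (there $T_k$ being neither isometric nor surjective onto that subspace, as noted above); one cannot then interpolate the subspace structure naively. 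The technical hypothesis imposed for odd $k$ is exactly what excludes this incompatible straddling configuration, and the main work is to show that under it the interpolated range coincides with $T_k\bigl(H^{s,\varphi,(k)}(\Omega)\bigr)$---carefully tracking which traces are determined at each endpoint and invoking the coincidence \eqref{3f11b} in the top regime $\sigma>k-1/2$---so that the reiteration in the half-integer case goes through.
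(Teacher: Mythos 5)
Your treatment of the non-modified spaces is correct and coincides with the proofs behind the paper's citation (the paper itself only refers to \cite[Theorems 1.14, 2.2, 3.2, and 4.22]{MikhailetsMurach14} here): identifying the generating operator of $[H^{s-\varepsilon}(\mathbb{R}^{n}),H^{s+\delta}(\mathbb{R}^{n})]$ with the Fourier multiplier of symbol $\langle\xi\rangle^{\varepsilon+\delta}$ gives the $\mathbb{R}^{n}$-case with equality of norms, the retraction--coretraction argument transfers it to $\Omega$ and $\Gamma$, and Potter-type bounds for the regularly varying function $\psi$ of index $\varepsilon/(\varepsilon+\delta)\in(0,1)$ give pseudoconcavity, hence the interpolation-parameter property. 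The gap is entirely in the Roitberg half, and it is genuine. First, your reading of the odd-$k$ hypothesis is wrong: the condition ``$s-\varepsilon>k-1/2$ or $s+\delta<k+1/2$'' does \emph{not} exclude points of $E_{k}$ from lying strictly between $s-\varepsilon$ and $s+\delta$. For $k=3$, $s=1$, $\varepsilon=\delta=1$ the condition holds (since $s+\delta=2<7/2$), yet $1/2$ and $3/2$ both belong to $E_{3}\cap(s-\varepsilon,s+\delta)$. So the ``no straddling'' scenario on which your argument for the modified spaces rests is simply not available under the stated hypothesis, and the hypothesis must be playing a different (and more delicate) role than the one you assign to it.

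Second, and more fundamentally, the step you defer --- showing that the interpolated range of $T_{k}$ equals $T_{k}\bigl(H^{s,\varphi,(k)}(\Omega)\bigr)$ --- is the entire content of the cited Theorem 4.22, not a bookkeeping afterthought. Interpolation does not commute with passage to subspaces unless there is a projector bounded at both endpoints and compatible with both ranges; here the two endpoint ranges are cut out by \emph{different} numbers of compatibility constraints $u_{j}=R_{\Gamma}D_{\nu}^{j-1}u_{0}$ (one constraint is added each time $\sigma$ crosses a point of $E_{k}$), so no naive common projector exists, and your proposal offers no construction replacing it. The same issue recurs, unaddressed, for the interior summand $H^{\sigma,(0)}(\Omega)$ when $s-\varepsilon<0<s+\delta$, where the two endpoint spaces are of different nature (a quotient-type negative space versus $H^{\sigma}(\Omega)$). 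Your appeal to reiteration for $s\in E_{k}$ presupposes that the straddling case has already been handled, so it cannot close the circle. In short: the first three quarters of the proposal are sound and standard, but the Roitberg-space statement is asserted rather than proved, and the one structural reason you give for why it should hold (the odd-$k$ condition forbidding straddling) is false.
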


This proposition is a collection of the results proved in \cite[Theorems 1.14, 2.2, 3.2, and 4.22]{MikhailetsMurach14}. We also need the following two general properties of the interpolation \cite[Theorems 1.7 and 1.5]{MikhailetsMurach14}.

\begin{proposition}\label{3prop3}
Let $X=[X_0,X_1]$ and  $Y=[Y_0,Y_1]$ be two admissible pairs of  Hilbert spaces. Suppose that a linear mapping $T$ is given on $X_0$ and satisfies the following condition: the restrictions of $T$ to the spaces $X_0$ and $X_1$ are Fredholm bounded operators $T:X_0\rightarrow Y_0$ and $T:X_1\rightarrow Y_1$ respectively, and these operators  have the same kernel and the same index. Then, for an arbitrary interpolation parameter $\psi\in\mathcal{B}$, the restriction of $T$ to $X_\psi$ is also a Fredholm bounded operator $T:X_\psi\rightarrow Y_\psi$ with the same kernel and the same index; the range of this operator is equal to $Y_\psi\cap T(X_0)$.
\end{proposition}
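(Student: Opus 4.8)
The plan is to reduce everything to the already-granted defining property of an interpolation parameter: a linear map that is bounded (respectively, an isomorphism) on both endpoints of two admissible pairs is automatically bounded (respectively, an isomorphism) between the interpolated spaces. Boundedness of $T:X_\psi\to Y_\psi$ is then immediate. The kernel is immediate too: since $N\subset X_1\hookrightarrow X_\psi\hookrightarrow X_0$ with all embeddings injective, a vector $x\in X_\psi$ satisfies $Tx=0$ in $Y_\psi$ iff $Tx=0$ in $Y_0$, so $\ker(T\colon X_\psi\to Y_\psi)=N$. Hence the whole content lies in describing the range and computing the index.

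First I would prepare two pairs of bounded projectors. Let $w_1,\ldots,w_d$ be a basis of $N$; choosing functionals $\ell_1,\ldots,\ell_d\in X_0^{*}$ biorthogonal to it, the finite-rank map $Px:=\sum_i\ell_i(x)w_i$ is a projector of $X_0$ onto $N$ whose restriction to $X_1$ and to $X_\psi$ is again a bounded projector onto $N$, because each $\ell_i$ restricts boundedly and each $w_i$ lies in $X_1$. Writing $M_\bullet:=(1-P)X_\bullet$ for $\bullet\in\{0,1,\psi\}$ we get $X_\bullet=N\dotplus M_\bullet$, and by the open mapping theorem $T$ maps $M_0$ isomorphically onto the closed space $T(X_0)$ and $M_1$ isomorphically onto $T(X_1)$. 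On the target side I would first show $T(X_1)=Y_1\cap T(X_0)$: the composite $Y_1\hookrightarrow Y_0\to Y_0/T(X_0)$ is continuous with kernel $Y_1\cap T(X_0)$ and with image that is dense and hence, being finite-dimensional, all of $Y_0/T(X_0)$; thus $Y_1\cap T(X_0)$ has codimension $\dim(Y_0/T(X_0))=:d'$ in $Y_1$. Since $T(X_1)\subseteq Y_1\cap T(X_0)$ and, by equality of kernels and indices, $T(X_1)$ also has codimension $d'$, the two spaces coincide. This lets me pick a single finite-dimensional $F\subset Y_1$ with $Y_0=T(X_0)\dotplus F$ and $Y_1=T(X_1)\dotplus F$, and the associated projector $R$ onto $F$ parallel to $T(X_0)$ is bounded on $Y_0$, $Y_1$ and $Y_\psi$ for the same reason as $P$.

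Now comes the interpolation step. The pairs $[M_0,M_1]$ and $[T(X_0),T(X_1)]$ are admissible, the required density and continuous embeddings following from continuity of $1-P$ and of $T$ together with density of $X_1$ in $X_0$. Since $T$ and its inverse are bounded on both endpoints, the interpolation property makes $T:[M_0,M_1]_\psi\to[T(X_0),T(X_1)]_\psi$ an isomorphism. It then remains to identify these interpolated spaces with the complemented subspaces cut out by the projectors: applying $1-P$ (bounded on $X_\psi$) gives $[M_0,M_1]_\psi=(1-P)X_\psi=M_0\cap X_\psi$, and likewise $[T(X_0),T(X_1)]_\psi=(1-R)Y_\psi=T(X_0)\cap Y_\psi$. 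Consequently $T(X_\psi)=T(N\dotplus M_\psi)=T(M_\psi)=T(X_0)\cap Y_\psi$, which is closed and, by the same density argument as above applied now to $Y_\psi\hookrightarrow Y_0$, has codimension $d'$ in $Y_\psi$. Thus $T:X_\psi\to Y_\psi$ is Fredholm with kernel $N$, with range $Y_\psi\cap T(X_0)$, and with index $d-d'$ equal to the common index.

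The step I expect to be the main obstacle is the identification $[M_0,M_1]_\psi=(1-P)X_\psi$ and its counterpart on the $Y$ side, that is, the fact that the interpolation functor commutes with passage to a subspace complemented by a projector bounded on both endpoints. This is precisely where the hypothesis that the two endpoint operators share the \emph{same} kernel and the \emph{same} index is exploited, since that is what permits the projectors $P$ and $R$ to be chosen uniformly across both levels. I would isolate this as a short lemma, proving $(1-P)X_\psi\subseteq[M_0,M_1]_\psi$ and the reverse inclusion by interpolating the bounded maps $1-P\colon X_j\to M_j$ and the inclusions $M_j\hookrightarrow X_j$ and checking that they compose to the identity on the interpolated subspace.
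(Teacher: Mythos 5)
The paper does not prove this proposition itself; it is imported by citation from the monograph of Mikhailets and Murach (their Theorem~1.7). Your argument is correct and is essentially the proof given there: decompose $X_j=N\dotplus M_j$ via a finite-rank projector built from a basis of $N$, use the equality of kernels and indices to show $T(X_1)=Y_1\cap T(X_0)$ and hence to find a complement $F\subset Y_1$ with $Y_j=T(X_j)\dotplus F$ at both levels, interpolate the isomorphisms $T\colon M_j\to T(X_j)$, and identify $[M_0,M_1]_\psi$ with $(1-P)X_\psi$ (and likewise on the $Y$ side) by the standard retraction lemma for subspaces complemented by a projector bounded at both endpoints --- the step you correctly single out and sketch. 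I see no gaps.
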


\begin{proposition}\label{3prop4}
Let $[X_{0}^{(k)},X_{1}^{(k)}]$, where $k=1,\ldots,p$, be a finite
collection of admissible pairs of Hilbert spaces. Then, for every function $\psi\in\mathcal{B}$, we have
\begin{equation*}
\biggl[\,\bigoplus_{k=1}^{p}X_{0}^{(k)},\,
\bigoplus_{k=1}^{p}X_{1}^{(k)}\biggr]_{\psi}=\,
\bigoplus_{k=1}^{p}\bigl[X_{0}^{(k)},\,X_{1}^{(k)}\bigr]_{\psi}
\end{equation*}
with equality of norms.
\end{proposition}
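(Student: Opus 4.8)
The plan is to reduce everything to the single observation that forming the generating operator of an admissible pair commutes with taking finite orthogonal direct sums, after which the assertion follows at once from the fact that the Borel functional calculus respects direct sums of positive-definite self-adjoint operators.

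First I would check that the pair $\bigl[\bigoplus_{k=1}^{p}X_{0}^{(k)},\bigoplus_{k=1}^{p}X_{1}^{(k)}\bigr]$ is itself admissible, so that the left-hand interpolation space is well defined. The embedding $\bigoplus_{k}X_{1}^{(k)}\hookrightarrow\bigoplus_{k}X_{0}^{(k)}$ is dense because each $X_{1}^{(k)}$ is dense in $X_{0}^{(k)}$ and the sum is finite, and it is continuous with constant $\max_{1\leq k\leq p}c_{k}$, where $c_{k}$ is the embedding constant of the $k$-th pair.

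Next, let $J_{k}$ denote the generating operator of $\bigl[X_{0}^{(k)},X_{1}^{(k)}\bigr]$ and set $J:=\bigoplus_{k=1}^{p}J_{k}$, i.e. the operator acting by $J(w_{1},\ldots,w_{p})=(J_{1}w_{1},\ldots,J_{p}w_{p})$ with domain $\bigoplus_{k}X_{1}^{(k)}$. This $J$ is positive-definite and self-adjoint on $\bigoplus_{k}X_{0}^{(k)}$, being an orthogonal direct sum of such operators, and for every $w=(w_{1},\ldots,w_{p})\in\bigoplus_{k}X_{1}^{(k)}$ it satisfies $\|Jw\|^{2}_{\oplus X_{0}}=\sum_{k}\|J_{k}w_{k}\|^{2}_{X_{0}^{(k)}}=\sum_{k}\|w_{k}\|^{2}_{X_{1}^{(k)}}=\|w\|^{2}_{\oplus X_{1}}$. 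By the uniqueness of the generating operator of an admissible pair, $J$ is precisely the generating operator of the direct-sum pair.

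Finally I would invoke the spectral-theoretic fact that the Borel functional calculus commutes with orthogonal direct sums, namely $\psi(J)=\bigoplus_{k=1}^{p}\psi(J_{k})$ with $\mathrm{Dom}\,\psi(J)=\bigoplus_{k}\mathrm{Dom}\,\psi(J_{k})$. Since, by definition, $\bigl[X_{0}^{(k)},X_{1}^{(k)}\bigr]_{\psi}=\mathrm{Dom}\,\psi(J_{k})$, this yields the set equality $\bigl[\bigoplus_{k}X_{0}^{(k)},\bigoplus_{k}X_{1}^{(k)}\bigr]_{\psi}=\bigoplus_{k}\bigl[X_{0}^{(k)},X_{1}^{(k)}\bigr]_{\psi}$; and the identity $\|w\|^{2}=\|\psi(J)w\|^{2}_{\oplus X_{0}}=\sum_{k}\|\psi(J_{k})w_{k}\|^{2}_{X_{0}^{(k)}}=\sum_{k}\|w_{k}\|^{2}_{[X_{0}^{(k)},X_{1}^{(k)}]_{\psi}}$ gives equality of norms. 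The only point requiring care, and the closest thing to an obstacle, is the functional-calculus identity $\psi\bigl(\bigoplus_{k}J_{k}\bigr)=\bigoplus_{k}\psi(J_{k})$ together with its accompanying domain description (here $\psi$ need not be bounded, but $J$ is bounded below by a positive constant, so the standard unbounded functional calculus applies); everything else is a direct computation.
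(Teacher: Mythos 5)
Your proof is correct; the paper itself offers no argument for this proposition, merely citing it as Theorem~1.5 of the monograph by Mikhailets and Murach, and your route --- identifying the generating operator of the direct-sum pair as the orthogonal direct sum of the generating operators $J_{k}$ and then using that the Borel functional calculus respects finite orthogonal sums (including the domain description, which for finitely many summands is immediate from the spectral integrals) --- is precisely the standard argument behind that cited result. Nothing further is needed.
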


\section{Proofs of the main results}\label{sec6}

In this section we will prove the results formulated in Section~\ref{sec4}.

\begin{proof}[Proof of Theorem~$\ref{3th1}$.] In the Sobolev case of $\varphi(t)\equiv1$, this theorem is proved in Roitberg's monograph \cite[Theorem 4.1.3]{Roitberg96} excepting the indication of the relation of $N_{\star}$ to the formally adjoint problem \eqref{3f4}, \eqref{3f5}. Roitberg states only that $N_{\star}$ in \eqref{3f15} is a certain finite-dimensional space which lies in $C^{\infty}(\overline{\Omega})\times(C^{\infty}(\Gamma))^{r-q}$ and does not depend on~$s\in\mathbb{R}$. It follows from this that
\begin{equation}\label{range-Sobolev-case}
\begin{gathered}
(A,B)(H^{s,(r)}(\Omega))=
\mathcal{H}^{s-2q,(r-2q)}(\Omega,\Gamma)\cap
(A,B)(H^{s-\varepsilon,(r)}(\Omega))\\
\mbox{for arbitrary}\;\;s\in\mathbb{R}\;\;\mbox{and}\;\;\varepsilon>0.
\end{gathered}
\end{equation}
The indicated description of the range of the Fredholm operator \eqref{3f14} with the help of the formally adjoint problem \eqref{3f4}, \eqref{3f5} is given in Kozlov, Maz'ya and Rossmann's monograph \cite[Theorem~4.1.4]{KozlovMazyaRossmann97} on the additional assumption that $\nobreak{s\in\mathbb{Z}}$. In the case of fractional $s$, this description follows directly from property \eqref{range-Sobolev-case} considered for $s-\varepsilon\in\mathbb{Z}$. Thus, the conclusion of Theorem~\ref{3th1} is true in the Sobolev case for every real~$s$.

Assuming $\varphi\in\mathcal{M}$, we will deduce Theorem~\ref{3th1} from the Sobolev case with the help of the interpolation with a function parameter. Namely, let $s\in\mathbb{R}$ and $\varphi\in\mathcal{M}$. Choose a number $\varepsilon\in(0,1/2)$, and consider the bounded linear operators
\begin{equation}\label{3f25}
(A,B):H^{s\mp\varepsilon,(r)}(\Omega)\rightarrow
\mathcal{H}^{s\mp\varepsilon-2q,(r-2q)}(\Omega,\Gamma).
\end{equation}
Owing to Theorem~\ref{3th1} in the Sobolev case, they are Fredholm, have the common kernel $N$ and the same index $\dim N-\dim N_{\star}$. Besides,
\begin{equation}\label{3f26}
(A,B)(H^{s\mp\varepsilon,(r)}(\Omega))=
\bigl\{(f,g)\in\mathcal{H}^{s\mp\varepsilon-2q,(r-2q)}(\Omega,\Gamma):\,
\eqref{3f15}\;\mbox{is true}\bigr\}.
\end{equation}

Let $\psi$ be the interpolation parameter from Proposition~\ref{3prop1} where $\delta:=\varepsilon$. Applying the interpolation with the function parameter $\psi$ to operators \eqref{3f25} and using Proposition~\ref{3prop3}, we obtain the Fredholm bounded operator
\begin{equation}\label{3f27}
\begin{aligned}
(A,B)&:\bigl[H^{s-\varepsilon,(r)}(\Omega),
H^{s+\varepsilon,(r)}(\Omega)\bigr]_{\psi}\\
&\to\bigl[\mathcal{H}^{s-\varepsilon-2q,(r-2q)}(\Omega,\Gamma),
\mathcal{H}^{s+\varepsilon-2q,(r-2q)}(\Omega,\Gamma)\bigr]_{\psi}.
\end{aligned}
\end{equation}
This operator is a restriction of the mapping \eqref{3f25} defined on
$H^{s-\varepsilon,(r)}(\Omega)$.

Let us describe the interpolation spaces in \eqref{3f27} with the help of Proposition~\ref{3prop1}. Since $0<\varepsilon<1/2$, both numbers
$s\mp\varepsilon$ satisfy at least one of the inequalities $s\mp\varepsilon>r-1/2$ and $s\mp\varepsilon<r+1/2$. Hence, owing to
Proposition~\ref{3prop1}, we have the equality
\begin{equation*}
\bigl[H^{s-\varepsilon,(r)}(\Omega),
H^{s+\varepsilon,(r)}(\Omega)\bigr]_{\psi}=H^{s,\varphi,(r)}(\Omega).
\end{equation*}
Besides,
\begin{align*}
&\bigl[\mathcal{H}^{s-\varepsilon-2q,(r-2q)}(\Omega,\Gamma),
\mathcal{H}^{s+\varepsilon-2q,(r-2q)}(\Omega,\Gamma)\bigr]_{\psi}\\
&=\bigl[H^{s-2q-\varepsilon,(r-2q)}(\Omega),
H^{s-2q+\varepsilon,(r-2q)}(\Omega)\bigr]_{\psi}\\
&\quad\oplus\bigoplus_{j=1}^{q}\,
\bigl[H^{s-m_{j}-1/2-\varepsilon}(\Gamma),
H^{s-m_{j}-1/2+\varepsilon}(\Gamma)\bigr]_{\psi}\\
&=H^{s-2q,\varphi,(r-2q)}(\Omega)\oplus
\bigoplus_{j=1}^{q}H^{s-m_{j}-1/2,\varphi}(\Gamma)\\
&=\mathcal{H}^{s-2q,\varphi,(r-2q)}(\Omega,\Gamma)
\end{align*}
in view of Proposition~\ref{3prop4}. These equalities of spaces are fulfilled with equivalence of norms.

Hence, the Fredholm bounded operator \eqref{3f27} is operator \eqref{3f14}. According to Proposition~\ref{3prop3}, the kernel and index of \eqref{3f14} coincide respectively with the common kernel $N$ and index $\dim N-\dim N_{\star}$ of operators \eqref{3f25}. Moreover, owing to \eqref{3f26}, the range of \eqref{3f14} is equal to
\begin{align*}
&\mathcal{H}^{s-2q,\varphi,(r-2q)}(\Omega,\Gamma)\cap
(A,B)(H^{s-\varepsilon,(r)}(\Omega))\\
&=\bigl\{(f,g)\in\mathcal{H}^{s-2q,\varphi,(r-2q)}(\Omega,\Gamma):
\eqref{3f15}\;\mbox{is true}\bigr\}.
\end{align*}
Thus, the Fredholm operator \eqref{3f14} has all the properties stated in Theorem~\ref{3th1}.
\end{proof}

\begin{proof}[Proof of Lemma $\ref{3lem1}$.] Decomposition \eqref{3f16} is demonstrated in the same way as that used in the proof of \cite[Lemma~4.4, formula (4.90)]{MikhailetsMurach14}, where the case of even $r$ is considered (see also the proofs of \cite[Lemmas 4.1.1 and 4.1.2]{Roitberg96} in the Sobolev case). Let us establish decomposition \eqref{3f17}.

We first examine the case where $s=r$ and $\varphi(t)\equiv1$. Consider the orthogonal sum
\begin{equation}\label{3f-proof1}
\mathcal{H}^{r-2q,(r-2q)}(\Omega,\Gamma)=
G_{0}\oplus\bigl\{(f,g)\in\mathcal{H}^{r-2q,(r-2q)}(\Omega,\Gamma):
\mbox{\eqref{3f15} is true}\bigr\}.
\end{equation}
Here, $G_{0}$ is a finite-dimensional subspace of $\mathcal{H}^{r-2q,(r-2q)}(\Omega,\Gamma)$ due to Theorem~\ref{3th1}. Since the linear manifold  $C^{\infty}(\overline{\Omega})\times(C^{\infty}(\Gamma))^{q}$ is dense in $\mathcal{H}^{r-2q,(r-2q)}(\Omega,\Gamma)$, it follows from \eqref{3f-proof1} by \cite[Lemma~2.1]{GohbergKrein60} that there exists a finite-dimensional subspace $G$ of $C^{\infty}(\overline{\Omega})\times(C^{\infty}(\Gamma))^{q}$ that
formula \eqref{3f17} holds true in the case considered. Owing to Theorem~\ref{3th1}, we have the equality $\dim G=\dim N_{\star}$.

Decomposition \eqref{3f17} remains true for this subspace $G$ in the general situation of arbitrary $s\in\mathbb{R}$ and $\varphi\in\mathcal{M}$. Indeed, since $G\subset C^{\infty}(\overline{\Omega})\times(C^{\infty}(\Gamma))^{q}$, we have the equalities
\begin{align*}
&G\cap\bigl\{(f,g)\in\mathcal{H}^{s-2q,\varphi,(r-2q)}(\Omega,\Gamma):
\mbox{\eqref{3f15} is true}\bigr\}\\
&=G\cap\bigl\{(f,g)\in\mathcal{H}^{r-2q,(r-2q)}(\Omega,\Gamma):
\mbox{\eqref{3f15} is true}\bigr\}=\{0\}.
\end{align*}
Besides, owing to Theorem~\ref{3th1}, the dimension of $G$ equaled $\dim N_{\star}$ coincides with the codimension of the second summand in \eqref{3f17}.
\end{proof}

\begin{proof}[Proof of Theorem $\ref{3th2}$.]
By virtue of Theorem~\ref{3th1}, the bounded linear operator \eqref{3f18} is a bijection. Therefore, this operator is an isomorphism according to the Banach theorem on inverse operator.
\end{proof}

\begin{proof}[Proof of Theorem $\ref{3th3}$.]
In the case of $\chi(\cdot)\equiv\eta(\cdot)\equiv1$, this theorem follows from Theorem~\ref{3th1} and Petre's lemma \cite[Lemma~3]{Peetre61}. Namely, according to this lemma, estimate \eqref{3f19} in this case is a consequence of the facts that the bounded operator \eqref{3f14} has a finite-dimensional kernel and closed range and that the embedding $H^{s,\varphi,(r)}(\Omega)\hookrightarrow H^{s-\sigma,\varphi,(r)}(\Omega)$ is compact. Thus, there exists a number  $c=c(s,\varphi,\sigma)>0$ such that
\begin{equation}\label{3f30}
\|u\|_{H^{s,\varphi,(r)}(\Omega)}\leq c\,
\bigl(\|(A, B)u\|_{\mathcal{H}^{s-2q,\varphi,(r-2q)}(\Omega,\Gamma)}+
\|u\|_{H^{s-\sigma,\varphi,(r)}(\Omega)}\bigr)
\end{equation}
for arbitrary $u\in H^{s,\varphi,(r)}(\Omega)$.

Let us prove Theorem~$\ref{3th3}$ in the general situation. Owing to \eqref{3f13}, we may restrict ourselves to the case of $1\leq\sigma\in\mathbb{Z}$. In this case, we will prove the theorem by induction in~$\sigma$.

Let us deduce Theorem~$\ref{3th3}$ in the $\sigma=1$ case from the global estimate \eqref{3f30}. Choose a vector $u\in H^{s,\varphi,(r)}(\Omega)$ arbitrarily. Replacing $u$ with $\chi u$ in \eqref{3f30} and taking $\sigma:=1$, we get the estimate
\begin{equation}\label{3f31}
\|\chi u\|_{H^{s,\varphi,(r)}(\Omega)}\leq c_0\,\bigl(\|(A, B)(\chi u)\|_{\mathcal{H}^{s-2q,\varphi,(r-2q)}(\Omega,\Gamma)}+
\|\chi u\|_{H^{s-1,\varphi,(r)}(\Omega)}\bigr).
\end{equation}
Here, the positive number $c_0:=c(s,\varphi,1)$ does not depend on $u$ (and $\chi$).

Interchanging the operator of the multiplication by $\chi$ with the differential operators $A$ and $B_1,\ldots,B_q$, we write the equalities
\begin{equation}\label{3f32}
\begin{aligned}
(A,B)(\chi u)&=(A,B)(\chi\eta u)=\chi(A,B)(\eta u)+(A',B')(\eta u)\\
&=\chi(A,B)u+(A',B')(\eta u).
\end{aligned}
\end{equation}
Here, $A'$ is a linear differential operator on $\overline{\Omega}$ of order $\mathrm{ord}\,A'\leq 2q-1$, whereas $B':=(B_{1}',\ldots,B_{q}')$ is a collection of boundary linear differential operators on $\Gamma$ such that $\mathrm{ord}\,B_{j}'\leq m_{j}-1$ for each $j\in\{1,\ldots,q\}$. All the coefficients of $A'$ and each $B_{j}'$ are infinitely smooth on $\overline{\Omega}$ and $\Gamma$ respectively. Being evident for $u\in C^{\infty}(\overline{\Omega})$, equalities \eqref{3f32} extend over all $u\in H^{s,\varphi,(r)}(\Omega)$ by closure due to Proposition~\ref{3prop5} and Remark~\ref{3rem0}, with all terms of these equalities being considered as elements of $\mathcal{H}^{s-2q,\varphi,(r-2q)}(\Omega,\Gamma)$.

Specifically,
\begin{equation}\label{3f32a}
\|(A',B')(\eta u)\|_{\mathcal{H}^{s-2q,\varphi,(r-2q)}(\Omega,\Gamma)}
\leq c_{1}\|\eta u\|_{H^{s-1,\varphi,(r)}(\Omega)}.
\end{equation}
Besides,
\begin{equation}\label{3f32b}
\|\chi u\|_{H^{s-1,\varphi,(r)}(\Omega)}=
\|\chi\eta u\|_{H^{s-1,\varphi,(r)}(\Omega)}\leq
c_{2}\|\eta u\|_{H^{s-1,\varphi,(r)}(\Omega)}.
\end{equation}
Here and below in the proof, we let $c_{1},c_{2},\ldots,c_{7}$ denote some positive numbers that do not depend on~$u$. Applying formulas \eqref{3f32}--\eqref{3f32b} to \eqref{3f31}, we obtain the inequalities
\begin{equation}\label{3f32c}
\begin{aligned}
\|\chi u\|_{H^{s,\varphi,(r)}(\Omega)}&\leq c_0\,
\bigl(\|\chi(A,B)u\|_{\mathcal{H}^{s-2q,\varphi,(r-2q)}(\Omega,\Gamma)}\\
&\quad\;\;+\|(A',B')(\eta u)\|_{\mathcal{H}^{s-2q,\varphi,(r-2q)}(\Omega,\Gamma)}+
\|\chi u\|_{H^{s-1,\varphi,(r)}(\Omega)}\bigr)\\
&\leq c_3\,
\bigl(\|\chi(A,B)u\|_{\mathcal{H}^{s-2q,\varphi,(r-2q)}(\Omega,\Gamma)}+
\|\eta u\|_{H^{s-1,\varphi,(r)}(\Omega)}\bigr).
\end{aligned}
\end{equation}
Thus, we have justified the last sentence of Theorem~$\ref{3th3}$. Estimate \eqref{3f19} in the $\sigma=1$ case follows from \eqref{3f32c} due to the inequality
\begin{align*}
\|\chi(A, B)u\|_{\mathcal{H}^{s-2q,\varphi,(r-2q)}(\Omega,\Gamma)}&=
\|\chi \eta(A, B)u\|_{\mathcal{H}^{s-2q,\varphi,(r-2q)}(\Omega,\Gamma)}
\\&\leq c_4
\|\eta(A, B)u\|_{\mathcal{H}^{s-2q,\varphi,(r-2q)}(\Omega,
\Gamma)}.
\end{align*}

We now choose an integer $\lambda\geq1$ arbitrarily and assume that  Theorem~$\ref{3th3}$ holds in the case of $\sigma=\lambda$. Let us deduce this theorem in the case of $\sigma=\lambda+1$. Let, as above, the functions $\chi,\eta$ satisfy the assumption of this theorem. We can choose a function $\eta_{1}\in C^{\infty}(\overline{\Omega})$ such that $\eta_{1}=1$ in a neighbourhood of $\mathrm{supp}\,\chi$ and that $\eta=1$ in a neighbourhood of $\mathrm{supp}\,\eta_{1}$. According to our assumption, an arbitrary vector $u\in H^{s,\varphi,(r)}(\Omega)$ satisfies the estimate
\begin{equation*}
\|\chi u\|_{H^{s,\varphi,(r)}(\Omega)}\leq c_{5}\bigl(\|\eta_{1}(A, B)u\|_{\mathcal{H}^{s-2q,\varphi,(r-2q)}(\Omega,\Gamma)}
+\|\eta_{1}u\|_{H^{s-\lambda,\varphi,(r)}(\Omega)}\bigr).
\end{equation*}
Here,
\begin{align*}
\|\eta_{1}(A,B)u\|_{\mathcal{H}^{s-2q,\varphi,(r-2q)}(\Omega,\Gamma)}&=
\|\eta_{1}\eta(A,B)u\|_{\mathcal{H}^{s-2q,\varphi,(r-2q)}(\Omega,\Gamma)}
\\&\leq c_6\|\eta(A, B)u\|_{\mathcal{H}^{s-2q,\varphi,(r-2q)}(\Omega,
\Gamma)}.
\end{align*}
Besides,
\begin{equation*}
\|\eta_{1}u\|_{H^{s-\lambda,\varphi,(r)}(\Omega)}\leq c_{7}
\bigl(\|\eta(A,B)u\|_
{\mathcal{H}^{s-\lambda-2q,\varphi,(r-2q)}(\Omega,\Gamma)}+\|\eta u\|_{H^{s-\lambda-1,\varphi,(r)}(\Omega)}\bigr)
\end{equation*}
due to Theorem~$\ref{3th3}$ in the $\sigma=1$ case just proved. These three bounds immediately imply the required estimate \eqref{3f19} in the case of $\sigma=\lambda+1$. By induction, Theorem~\ref{3th3} is proved for every integer $\sigma\geq1$.
\end{proof}

\begin{proof}[Proof of Theorem $\ref{3th4}$.]
First we will prove Theorem~\ref{3th4} in the case where $\Omega_{0}=\Omega$ and $\Gamma_{0}=\Gamma$. By Theorem~\ref{3th1}, the vector $(f,g):=(A,B)u$ satisfies condition \eqref{3f15}. Besides, $(f,g)\in\mathcal{H}^{s-2q,\varphi,(r-2q)}(\Omega,\Gamma)$ by the hypothesis of Theorem $\ref{3th4}$ in the case considered. Therefore, $(f,g)\in (A,B)(H^{s,\varphi,(r)}(\Omega))$ due to Theorem~\ref{3th1}. Thus, there exists a vector $v\in H^{s,\varphi,(r)}(\Omega)$ such that $(A,B)v=(f,g)$. Then $(A,B)(u-v)=0$; hence, $w:=u-v\in N\subset C^{\infty}(\overline{\Omega})$ by Theorem~\ref{3th1}. Therefore, $u=v+w\in H^{s,\varphi,(r)}(\Omega)$. Theorem \ref{3th4} is proved in the case considered.

Let us now prove Theorem~\ref{3th4} in the general situation. We will previously show that, under the hypothesis of this theorem, the following implication holds for every integer $k\geq1$:
\begin{equation}\label{th4.6-proof-f1}
u\in H^{s-k,\varphi,(r)}_{\mathrm{loc}}(\Omega_{0},\Gamma_{0})
\;\Longrightarrow\;
u\in H^{s-k+1,\varphi,(r)}_{\mathrm{loc}}(\Omega_{0},\Gamma_{0}).
\end{equation}
We arbitrarily choose an integer $k\geq1$ and suppose that the premise of this implication is true. Then $\chi u\in H^{s-k,\varphi,(r)}(\Omega)$ for every function $\chi\in C^{\infty}(\overline{\Omega})$ such that $\mathrm{supp}\,\chi\subset\Omega_{0}\cup\Gamma_{0}$. Choose a function $\eta\in C^{\infty}(\overline{\Omega})$ such that $\eta=\nobreak1$ in a neighbourhood of $\mathrm{supp}\,\chi$. Owing to \eqref{3f32}, we have the equality
\begin{equation}\label{th4.6-proof-f2}
(A,B)(\chi u)=\chi(A,B)u+(A',B')(\eta u),
\end{equation}
in which the differential operators $A'$ and $B'$ are the same as that in the proof of Theorem~~\ref{3th3}. Here,
\begin{equation}\label{th4.6-proof-f3}
\chi(A,B)u=\chi(f,g)\in\mathcal{H}^{s-2q,\varphi,(r-2q)}(\Omega,\Gamma)
\end{equation}
due to the hypothesis of Theorem \ref{3th4}. Besides, $\eta u\in H^{s-k,\varphi,(r)}(\Omega)$ by the premise, which implies the inclusion
\begin{equation}\label{th4.6-proof-f4}
(A',B')(\eta u)\in\mathcal{H}^{s-k-2q+1,\varphi,(r-2q)}(\Omega,\Gamma)
\end{equation}
due to Proposition~\ref{3prop5} and Remark~\ref{3rem0}. Owing to formulas \eqref{th4.6-proof-f2}--\eqref{th4.6-proof-f4}, we have the inclusion
\begin{equation}
(A,B)(\chi u)\in\mathcal{H}^{s-k+1-2q,\varphi,(r-2q)}(\Omega,\Gamma).
\end{equation}
Hence, $\chi u\in H^{s-k+1,\varphi,(r)}(\Omega)$ according to Theorem~\ref{3th4} proved above in the global case of $\Omega_{0}=\Omega$ and $\Gamma_{0}=\Gamma$. Thus, $u\in H^{s-k+1,\varphi,(r)}_{\mathrm{loc}}(\Omega_{0},\Gamma_{0})$ because of the arbitrariness of the function $\chi$ used. Implication \eqref{th4.6-proof-f1} is proved.

Using this implication, we can prove Theorem~\ref{3th4} in the general case. According to the condition $u\in H^{-\infty,(r)}(\Omega)$ and embedding \eqref{3f13}, there exists an integer $\ell\geq1$ such that
\begin{equation*}
u\in H^{s-\ell,\varphi,(r)}(\Omega)\subset H^{s-\ell,\varphi,(r)}_{\mathrm{loc}}(\Omega_{0},\Gamma_{0}).
\end{equation*}
Therefore, using \eqref{th4.6-proof-f1} successively for $k=\ell$, $k=\ell-1$, ..., and $k=1$, we arrive at the required inclusion $u\in H^{s,\varphi,(r)}_{\mathrm{loc}}(\Omega_{0},\Gamma_{0})$.
\end{proof}

To prove Theorem $\ref{3th5}$, we make use of the following corollary from H\"ormander's embedding theorem \cite[Theorem~2.2.7]{Hermander63}: let $0\leq p\in\mathbb{Z}$ and $\varphi\in\mathcal{M}$; then
\begin{equation}\label{3f20}
\int\limits_{1}^{\infty}\frac{dt}{t\,\varphi^{2}(t)}<\infty\;\;
\Longleftrightarrow\;\;
H^{p+n/2,\varphi}(\Omega)\hookrightarrow C^p(\overline{\Omega}),
\end{equation}
the embedding being compact (see \cite[Theorem~3.4]{MikhailetsMurach14}). Property \eqref{3f20}, so to say, refines the well-known Sobolev's embedding theorem, which asserts that
\begin{equation*}
s>p+n/2\;\;\Longleftrightarrow\;\;
H^{s}(\Omega)\hookrightarrow C^p(\overline{\Omega}).
\end{equation*}

\begin{proof}[Proof of Theorem $\ref{3th5}$.]
By Theorem~\ref{3th4}, we have the inclusion
\begin{equation}\label{th4.7-proof-f1}
u\in H^{p+n/2,\varphi,(r)}_{\mathrm{loc}}(\Omega_{0},\Gamma_{0}).
\end{equation}
We first consider the case of $\Gamma_{0}\neq\emptyset$. Then $p>r-(n+1)/2$ by the hypothesis of Theorem~\ref{3th5}. Owing to property \eqref{th4.7-proof-f1}, equality \eqref{3f11b}, condition \eqref{3f20a}, and equivalence \eqref{3f20}, we obtain the inclusion
\begin{equation*}
\chi u\in H^{p+n/2,\varphi,(r)}(\Omega)=H^{p+n/2,\varphi}(\Omega)
\subset H^{r-1/2+}(\Omega)\cap C^{p}(\overline{\Omega})
\end{equation*}
for every function $\chi\in C^{\infty}(\overline{\Omega})$ such that $\mathrm{supp}\,\chi\subset\Omega_0\cup\Gamma_0$. Thus, $u\in C^{p}(\Omega_{0}\cup\Gamma_{0})$.

Turn to the case of $\Gamma_{0}=\emptyset$. According to  \eqref{th4.7-proof-f1}, \eqref{3f20a}, and \eqref{3f20}, we have the inclusion
\begin{equation*}
(\chi u)_{0}\in H^{p+n/2,\varphi,(0)}(\Omega)=H^{p+n/2,\varphi}(\Omega)
\subset H^{-1/2+}(\Omega)\cap C^{p}(\overline{\Omega})
\end{equation*}
for every function $\chi\in C^{\infty}(\overline{\Omega})$ such that $\mathrm{supp}\,\chi\subset\Omega_0$. Thus, $u\in C^{p}(\Omega_{0})$.
\end{proof}

\begin{proof}[Proof of Theorem $\ref{3th6}$.] Theorem~\ref{3th5} in the case of $p:=2q$, $\varphi:=\varphi_1$, $\Omega_{0}:=\Omega$, and $\Gamma_{0}:=\emptyset$ asserts that condition \eqref{3f23} implies the inclusion $u\in C^{2q}(\Omega)$. Besides, owing to the same theorem in the case of $p:=m$, $\varphi:=\varphi_2$, $\Omega_{0}:=U_{\sigma}$, and $\Gamma_{0}:=\Gamma$, we conclude that conditions \eqref{3f23} and \eqref{3f24} entail the inclusion $u\in C^{m}(U_{\sigma}\cup \Gamma)$.
Note that the assumption $p>r-(n+1)/2$ made in this theorem in the $\Gamma_{0}\neq\emptyset$ case is fulfilled for $p=m$. Thus, the solution $u$ is classical.
\end{proof}

Completing this section, we will justify Remark $\ref{3rem3}$. Let  $0\leq p\in\mathbb{Z}$ and $\varphi\in\mathcal{M}$, and assume that implication \eqref{3-implication} holds true. We must show that $\varphi$ satisfies \eqref{3f20a}. Choose an open ball $V$ subject to the condition $\overline{V}\subset\Omega_{0}$, and make use of a bounded linear operator
\begin{equation*}
\Upsilon:H^{p+n/2,\varphi}(V)\to H^{p+n/2,\varphi,(r)}(\Omega)
\end{equation*}
such that $(\Upsilon v)_{0}=v$ in $V$ for every $v\in H^{p+n/2,\varphi}(V)$. Here, as above, $(\Upsilon v)_{0}$ is the initial component of the vector $T_{r}(\Upsilon v)$, with $T_{r}$ being the operator introduced in Section~\ref{sec3}. We will build $\Upsilon$ in the next paragraph. Choosing $v\in H^{p+n/2,\varphi}(V)$ arbitrarily, we put $u:=\Upsilon v$ in implication \eqref{3-implication}. Since the vector $u\in H^{p+n/2,\varphi,(r)}(\Omega)$ satisfies the premise of this implication, the inclusion $u\in C^{p}(\Omega_{0}\cup\Gamma_{0})$ holds true. Specifically, $(\chi u)_{0}\in C^{p}(\Omega)$ for every function $\chi\in C^{\infty}(\overline{\Omega})$ such that $\mathrm{supp}\,\chi\subset\Omega_{0}$ and that $\chi=1$ on $\overline{V}$. Observing that
\begin{equation*}
v=(\Upsilon v)_{0}=u_{0}=\chi u_{0}=(\chi u)_{0}\quad
\mbox{in}\;\;V,
\end{equation*}
we arrive at the inclusion $v\in C^p(\overline{V})$. Thus, $H^{p+n/2,\varphi}(V)\hookrightarrow C^p(\overline{V})$, which implies the required condition \eqref{3f20a} due to equivalence \eqref{3f20} in which $V$ is taken instead of $\Omega$. We have justified Remark~\ref{3rem3}.

Let us build the operator $\Upsilon$. To this end we need a linear mapping $K:L_{2}(V)\to L_{2}(\mathbb{R}^{n})$ such that $Kv=v$ in $V$ for every $v\in L_{2}(V)$ and that the restriction of this mapping to every Sobolev space $H^{\sigma}(V)$ of order $\sigma\in[0,p+n]$ is a bounded operator from $H^{\sigma}(V)$ to $H^{\sigma}(\mathbb{R}^{n})$. This extension mapping is given, e.g., in \cite[Theorem~4.2.3]{Triebel95}. We also choose a function $\eta\in C^{\infty}_{0}(\mathbb{R}^{n})$ such that $\mathrm{supp}\,\eta\subset\Omega$ and $\eta=1$ on $\overline{V}$. Put
\begin{equation*}
\Upsilon v:=T_{r}^{-1}\bigl((\eta Kv)\!\!\upharpoonright\!\Omega,\underbrace{0,\ldots,0}_
{r\;\text{times}}\,)\bigr)\quad\mbox{for every}\quad v\in L_{2}(V).
\end{equation*}
The element $\Upsilon v\in H^{0,(r)}(\Omega)$ is well defined because the vector $((\eta Kv)\!\!\upharpoonright\!\Omega,0,\ldots,0)$ belongs to $T_{r}(H^{0,(r)}(\Omega))=\Pi_{0,(r)}(\Omega,\Gamma)$. Hence, we have the linear mapping $v\mapsto \Upsilon v$ from  $H^{0}(V)=L_{2}(V)$ to $H^{0,(r)}(\Omega)$. It follows from the definition of this mapping that $(\Upsilon v)_{0}=v$ in $V$ for every $v\in L_{2}(V)$. Besides, the restriction of the mapping to $H^{\sigma}(\Omega)$ is a bounded operator
\begin{equation}\label{rem-proof-f3}
\Upsilon:H^{\sigma}(V)\to H^{\sigma,(r)}(\Omega)
\quad\mbox{for every}\quad\sigma\in[0,p+n]\setminus E_{r}.
\end{equation}
This follows from the implication
\begin{equation*}
v\in H^{\sigma}(V)\;\Longrightarrow\;
((\eta Kv)\!\!\upharpoonright\!\Omega,0,\ldots,0)\in T_{r}^{-1}(H^{\sigma,(r)}(\Omega)),
\end{equation*}
which holds because the function $\eta Kv\in H^{\sigma}(\mathbb{R}^{n})$ vanishes near $\Gamma$. Applying the interpolation with a function parameter to operators \eqref{rem-proof-f3}, we can prove that the mapping  $\Upsilon$ acts continuously from $H^{s,\varphi}(V)$ to $H^{s,\varphi,(r)}(\Omega)$ for every $s\in(0,p+n)$ and $\varphi\in\mathcal{M}$. Namely, choose a number $\varepsilon\in(0,1/4)$ such that $s\mp\varepsilon\in[0,p+n]\setminus E_{r}$, and let $\psi$ be the interpolation parameter from Proposition~\ref{3prop1} in the case of $\delta=\varepsilon$. Applying the interpolation with the function parameter $\psi$ to the bounded linear operators \eqref{rem-proof-f3} considered for $\sigma=s\mp\varepsilon$, we conclude by Proposition~\ref{3prop1} that a restriction of the mapping $\Upsilon$ is a bounded operator
\begin{align*}
\Upsilon&:H^{s,\varphi}(V)=
\bigl[H^{s-\varepsilon}(V),H^{s+\varepsilon}(V)\bigr]_{\psi}\\
&\to\bigl[H^{s-\varepsilon,(r)}(\Omega),
H^{s+\varepsilon,(r)}(\Omega)\bigr]=H^{s,\varphi,(r)}(\Omega).
\end{align*}
Thus, the required operator $\Upsilon$ is built.


\begin{thebibliography}{99}

\bibitem{AgmonDouglisNirenberg59}
{S. Agmon, A. Douglis, and L. Nirenberg},
\textit{Estimates near the boundary for solutions of elliptic partial differential equations satisfying general boundary conditions.~I},
{Comm. Pure Appl. Math.}
\textbf{12}
{(1959)},
{no.~4},
{623--727}.

\bibitem{Agranovich97}
{M. S. Agranovich},
\textit{Elliptic boundary problems},
{Partial differential equations, IX},
{Encyclopaedia Math. Sci., Springer, Berlin},
{vol.~79},
{1997},
{pp.~1--144}.

\bibitem{AnopKasirenko16MFAT}
{A. V. Anop and T. M. Kasirenko},
\textit{Elliptic boundary-value problems in H\"ormander spaces},
{Methods Funct. Anal. Topology}
\textbf{22}
{(2016)},
{no.~4},
{295--310}.

\bibitem{AnopMurach14MFAT}
{A. V. Anop and A. A. Murach},
\textit{Parameter-elliptic problems and interpolation with a function parameter},
{Methods Funct. Anal. Topology}
\textbf{20}
{(2014)},
{no.~2},
{103--116}.

\bibitem{AnopMurach14UMJ}
{A. V. Anop and  A. A. Murach},
\textit{Regular elliptic boundary-value problems in the extended Sobolev scale},
{Ukrainian Math. J.}
\textbf{66}
{(2014)},
{no.~7},
{969--985}.

\bibitem{BerezanskyKreinRoitberg63}
{Ju.(Yu.) M. Berezanskii, S. G. Krein, and Ja.(Ya.) A. Roitberg}, \textit{A theorem on homeomorphisms and local increase of smoothness up to the boundary for solutions of elliptic equations} (Russian),
{Dokl. Akad. Nauk SSSR}
\textbf{148}
{(1963)},
{745--748}
{(English translation in: Dokl. Math. \textbf{4} (1963), 152--155)}.

\bibitem{Berezansky68}
{Yu. M. Berezansky},
\textit{Expansions in Eigenfunctions of Selfadjoint Operators},
{Transl. Math. Monogr., vol.~17},
{American Mathematical Society},
{Providence, RI},
{1968}.

\bibitem{BerghLefstrem80}
{J. Bergh and J. L\"ofstr\"om},
\textit{Interpolation Spaces},
{Grundlehren Math. Wiss., vol.~223},
{Springer},
{Berlin},
{1976}.

\bibitem{BinghamGoldieTeugels89}
{N. H. Bingham, C.~M. Goldie, and J.~L. Teugels},
\textit{Regular Variation},
{Encyclopedia Math. Appl., vol.~27},
{Cambridge University Press},
{Cambridge},
{1989}.

\bibitem{Bitsadze90}
{A. V. Bitsadze},
\textit{On the Neumann problem for harmonic functions},
{Dokl. Math.}
\textbf{41}
{(1990)},
{no. 2},
{193--195}.

\bibitem{Bonnaillie-NoelDambrineHerauVial10}
{V. Bonnaillie-Noel, M. Dambrine, F. Herau, and G. Vial},
\textit{On generalized Ventcel's type boundary conditions for Laplace operator in a bounded domain},
{SIAM J. Math. Anal.}
\textbf{42}
{(2010)},
{no. 2},
{931--945}.

\bibitem{ChepurukhinaMurach15MFAT}
{I. S. Chepurukhina and A. A. Murach},
\textit{Elliptic problems in the sense of B.~Lawruk on two-sided refined scales of spaces},
{Methods Funct. Anal. Topology}
\textbf{21}
{(2015)},
{no.~1},
{6--21}.

\bibitem{EidelmanZhitarashu98}
{S. D. Eidel'man and N. V. Zhitarashu},
\textit{Parabolic Boundary Value Problems},
{Oper. Theory Adv. Appl., vol.~101},
{Birkh\"auser},
{Basel},
{1998}.

\bibitem{FoiasLions61}
{C. Foia\c{s} and J.-L. Lions},
\textit{Sur certains th\'eor\`emes d'interpolation},
{Acta Scient. Math. Szeged}
\textbf{22}
{(1961)},
{no.~3--4},
{269--282}.

\bibitem{GohbergKrein60}
{I. Hohberg and M. Krein},
\textit{The basic propositions on defect numbers, root vectors, and indices of linear operators},
{Amer. Math. Soc. Transl. Ser. II}
\textbf{13}
{(1960)},
{no.~2},
{185--264}.

\bibitem{Hermander63}
{L. H\"ormander},
\textit{Linear Partial Differential Operators},
{Grundlehren Math. Wiss., vol.~116},
{Springer-Verlag},
{Berlin},
{1963}.

\bibitem{Hermander83}
{L. H\"ormander},
\textit{The Analysis of Linear Partial Differential Operators, vol.~II, Differential Operators with Constant Coefficients},
{Grundlehren Math. Wiss., vol.~257},
{Springer-Verlag},
{Berlin},
{1983}.

\bibitem{Hermander85}
{L. H\"ormander},
\textit{The Analysis of Linear Partial Differential Operators, vol.~III, Pseudo-Differential Operators},
{Grundlehren Math. Wiss., vol.~274},
{Springer-Verlag},
{Berlin},
{1985}.

\bibitem{Jacob010205}
{N. Jacob},
\textit{Pseudodifferential Operators and Markov Processes}
{(in 3 volumes)},
{Imperial College Press},
{London},
{2001, 2002, 2005}.

\bibitem{Karachik92}
{V. V. Karachik},
\textit{Solvability of a boundary value problem for the Helmholtz equation with higher-order normal derivatives on the boundary} {(Russian)},
{Differ. Uravn.}
\textbf{28}
{(1992)},
{no.~5},
{907--909}.

\bibitem{Karachik96}
{V. V. Karachik},
\textit{On a problem for the Poisson equation with higher-order normal derivatives on the boundary},
{Differ. Equ.}
\textbf{32}
{(1996)},
{no.~3},
{421--424}.

\bibitem{Karamata30a}
{J. Karamata},
\textit{Sur certains "Tauberian theorems" \;de M.~M.~Hardy et Litt\-lewood},
{Mathematica (Cluj)}
\textbf{3}
{(1930)},
{33--48}.

\bibitem{KostarchukRoitberg73}
{Ju.(Yu.) V. Kostarchuk and Ja.(Ya.) A. Roitberg},
\textit{Isomorphism theorems for elliptic boundary value problems with boundary conditions that are not normal}
{(Russian)},
{Ukra\"\i n. Mat. Zh.}
\textbf{25}
{(1973)},
{no.~2},
{277--283}
{(English translation in: Ukrainian Math. J. 25 (1973), no.~2, 222--226)}.

\bibitem{Kozhevnikov01}
{A. Kozhevnikov},
\textit{Complete scale of isomorphisms for elliptic pseudodifferential boundary-value problems},
{J. London Math. Soc.~(2)}
\textbf{64}
{(2001)},
{no.~2},
{409--422}.

\bibitem{KozlovMazyaRossmann97}
{V. A. Kozlov, V. G. Maz'ya, and J. Rossmann},
\textit{Elliptic Boundary Value Problems in Domains with Point Singularities},
{Math. Surveys Monogr., vol.~52},
{American Mathematical Society},
{Providence, RI},
{1997}.

\bibitem{Krasil'nikov61}
{V. N. Krasil'nikov},
\textit{On the solution of some boundary-contact problems of linear hydrodynamics},
{J. Appl. Math. Mech.}
\textbf{25}
{(1961)},
{1134--1141}.

\bibitem{KreinPetuninSemenov82}
{S.~G.~Krein, Yu.~L.~Petunin, and E.~M.~Sem\"enov}, \textit{Interpolation of Linear Operators},
{Transl. Math. Monogr., vol.~54},
{American Mathematical Society},
{Providence, R.I.},
{1982}.

\bibitem{Lawruk63a}
{B. Lawruk},
\textit{Parametric boundary-value problems for elliptic systems of linear differential equations. I.~Construction of conjugate problems}
{(Russian)},
{Bull. Acad. Polon. Sci. S\'{e}r. Sci. Math. Astronom. Phys.}
\textbf{11}
{(1963)},
{no.~5},
{257--267}.

\bibitem{LionsMagenes72}
{J.-L. Lions and E. Magenes},
\textit{Non-Homogeneous Boundary-Value Problems and Applications, vol.~I},
{Grundlehren Math. Wiss., vol.~181},
{Springer-Verlag},
{New York\,--\,Heidelberg},
{1972}.


\bibitem{Los17UMJ9}
{V. M. Los},
\textit{Classical solutions of parabolic initial-boundary-value problems and H\"ormander spaces},
{Ukrainian Math. J.}
\textbf{68}
{(2017)},
{no.~9},
{1412--1423}.

\bibitem{LosMikhailetsMurach17CPAA}
{V. Los, V. A. Mikhailets, and A. A. Murach},
\textit{An isomorphism theorem for parabolic problems in H\"ormander spaces and its applications},
{Commun. Pure Appl. Anal.}
\textbf{16}
{(2017)},
{no.~1},
{69--97}.

\bibitem{LosMurach13MFAT2}
{V. Los and A. A. Murach},
\textit{Parabolic problems and interpolation with a function parameter}, {Methods Funct. Anal. Topology}
\textbf{19}
{(2013)},
{no.~2},
{146--160}.

\bibitem{LosMurach17OpenMath}	
{V. Los and A. Murach},
\textit{Isomorphism theorems for some parabolic initial-boundary value problems in H\"ormander spaces},
{Open Math.}
\textbf{15}
{(2017)},
{57--76}.

\bibitem{LukyanovNazarov98}
{V. V. Luk'yanov and A. I. Nazarov},
\textit{Solution of the Venttsel' problem for the Laplace and the Helmholtz equations by means of iterated potentials},
{J. Math. Sci. (N. Y.)}
\textbf{102}
{(1998)},
{no. 4},
{4265--4274}.

\bibitem{LuoTrudinger91}
{Y. Luo and N. S. Trudinger},
\textit{Linear second order elliptic equations with Venttsel' boundary conditions},
{Proc. Roy. Soc. Edinburgh Sect. A}
\textbf{118}
{(1991)},
{no. 3--4},
{193--207}.

\bibitem{MikhailetsMurach05UMJ5}
{V. A. Mikhailets and A. A. Murach},
\textit{Elliptic operators in a refined scale of function spaces}, {Ukrainian. Math. J.}
\textbf{57},
{(2005)},
{no.~5},
{817--825}.

\bibitem{MikhailetsMurach06UMJ2}
{V. A. Mikhailets and A. A. Murach},
\textit{Refined scales of spaces, and elliptic boundary value problems.~I},
{Ukrainian Math. J.}
\textbf{58},
{(2006)},
{no.~2},
{244--262}.

\bibitem{MikhailetsMurach06UMJ3}
{V. A. Mikhailets and A. A. Murach},
\textit{Refined scales of spaces, and elliptic boundary value problems.~II},
{Ukrainian Math. J.}
\textbf{58}
{(2006)},
{no.~3},
{398--417}.

\bibitem{MikhailetsMurach06UMJ11}
{V. A. Mikhailets and A. A. Murach},
\textit{A regular elliptic boundary value problem for a homogeneous equation in a two-sided refined scale of spaces},
{Ukrainian Math.~J.}
\textbf{58}
{(2006)},
{no.~11},
{1748--1767}.

\bibitem{MikhailetsMurach06UMB4}
{V. A. Mikhailets and A. A. Murach},
\textit{An elliptic operator with homogeneous regular boundary conditions in a two-sided refined scale of spaces},
{Ukr. Math. Bull.}
\textbf{3}
{(2006)},
{no.~4},
{529--560}.

\bibitem{MikhailetsMurach07UMJ5}
{V. A. Mikhailets and A. A. Murach},
\textit{Refined scales of spaces, and elliptic boundary value problems.~III},
{Ukrainian Math. J.}
\textbf{59}
{(2007)},
{no.~5},
{744--765}.

\bibitem{MikhailetsMurach08UMJ4}
{V. A. Mikhailets and A. A. Murach},
\textit{An elliptic boundary-value problem in a two-sided refined scale of spaces},
{Ukrainian. Math. J.}
\textbf{60}
{(2008)},
{no.~4},
{574--597}.

\bibitem{09OperatorTheory191}
{V. A. Mikhailets and A. A. Murach},
\textit{Elliptic problems and H\"ormander spaces},
{Oper. Theory. Adv. Appl.}
\textbf{191}
{(2009)},
{447--470}.

\bibitem{MikhailetsMurach12BJMA2}
{V. A. Mikhailets and A. A. Murach},
\textit{The refined Sobolev scale, inter\-po\-la\-tion, and elliptic problems},
{Banach J. Math. Anal.}
\textbf{6}
{(2012)},
{no.~2},
{211--281}.

\bibitem{MikhailetsMurach13UMJ3}
{V. A. Mikhailets and A. A. Murach},
\textit{Extended Sobolev scale and elliptic operators},
{Ukrainian Math.~J.}
\textbf{65}
{(2013)},
{no.~3},
{435--447}.

\bibitem{MikhailetsMurach14}
{V. A. Mikhailets and A. A. Murach},
\textit{H\"ormander spaces, interpolation, and elliptic problems},
{De Gruyter Studies in Math., vol.~60},
{De Gruyter},
{Berlin},
{2014}.

\bibitem{MikhailetsMurach15ResMath1}
{V. A. Mikhailets and A. A. Murach}, \textit{Interpolation Hilbert spaces between Sobolev spaces},
{Results Math.}
\textbf{67}
{(2015)},
{no.~1},
{135--152}.

\bibitem{Murach08MFAT2}
{A. A. Murach},
\textit{Douglis-Nirenberg elliptic systems in the refined scale of spaces on a closed manifold},
{Methods Funct. Anal. Topology}
\textbf{14}
{(2008)},
{no.~2},
{142--158}.

\bibitem{MurachChepurukhina15UMJ}
{A. A. Murach and I. S. Chepurukhina},
\textit{Elliptic boundary-value problems in the sense of Lawruk on Sobolev and H\"ormander spaces},
{Ukrainian Math.~J.}
\textbf{67}
{(2015)},
{no.~5},
{764--784}.

\bibitem{MurachZinchenko13MFAT1}
{A. A. Murach and T. N. Zinchenko},
\textit{Parameter-elliptic operators on the extended Sobolev scale}, {Methods Funct. Anal. Topology}
\textbf{19}
{(2013)},
{no.~1},
{29--39}.

\bibitem{NicolaRodino10}
{F. Nicola and L. Rodino},
\textit{Global Pseudodifferential Calculus on Euclidean Spaces},
{Pseudo Diff. Oper., vol.~4},
{Birkh\"aser},
{Basel},
{2010}.

\bibitem{Paneah00}
{B. Paneah},
\textit{The Oblique Derivative Problem. The Poincar\'e Problem}, {Wiley--VCH},
{Berlin},
{2000}.

\bibitem{Peetre61}
{J. Peetre}, \textit{Another approach to elliptic boundary problems},  {Comm. Pure Appl. Math.}
\textbf{14}
{(1961)},
{no.~4},
{711--731}.

\bibitem{Peetre66}
{J. Peetre}, \textit{On interpolation functions},
{Acta Sci. Math. (Szeged)}
\textbf{27}
{(1966)},
{167--171}.

\bibitem{Peetre68}
{J. Peetre},
\textit{On interpolation functions.~II},
{Acta Sci. Math. (Szeged)}
\textbf{29}
{(1968)},
{91--92}.

\bibitem{Roitberg64}
{Ja.(Ya.) A. Roitberg},
\textit{Elliptic problems with non-homogeneous boundary conditions and local increase of smoothness of generalized solutions up to the boundary}
{(Russian)},
{Dokl. Akad. Nauk SSSR}
\textbf{157}
{(1964)},
{798--801}
{(English translation in: Dokl. Math. \textbf{5} (1964), 1034--1038)}.

\bibitem{Roitberg65}
{Ja.(Ya.) A. Roitberg},
\textit{A theorem on the homeomorphisms induced in $L_{p}$ by elliptic operators and the local smoothing of generalized solutions}
{(Russian)},
{Ukra\"\i n. Mat. Zh.}
\textbf{17}
{(1965)},
{no.~5},
{122--129}.

\bibitem{Roitberg68}
{Ja.(Ya.) A. Roitberg},
\textit{Theorems on homeomorphisms which can be realized by elliptic operators}
{(Russian)},
{Dokl. Akad. Nauk SSSR}
\textbf{180}
{(1968)},
{542--545}
{(English translation in: Dokl. Math. \textbf{9} (1968), 656--660)}.

\bibitem{Roitberg69}
{Ja.(Ya.) A. Roitberg},
\textit{Green’s formula and a theorem on homeomorphisms for general elliptic boundary value problems with boundary conditions which are not normal}
{(Russian)},
{Ukra\"\i n. Mat. Zh.}
\textbf{21}
{(1969)},
{no.~3},
{406--413}
{(English translation in: Ukrainian Math. J. \textbf{21} (1969), no.~3, 343--349)}.

\bibitem{Roitberg70}
{Ja.(Ya.) A. Roitberg},
\textit{Homeomorphism theorems and Green’s formula for general elliptic boundary value problems with boundary conditions that are not normal} {(Russian)},
{Mat. Sb.}
\textbf{83(125)}
{(1970)},
{no.~2(10)}
{181--213}
{(English translation in: Sb. Math. \textbf{12} (1970), no.~2, 177--212)}.

\bibitem{Roitberg96}
{Ya. Roitberg},
\textit{Elliptic Boundary Value Problems in the Spaces of Distributions},
{Math. Appl. (Springer), vol.~384},
{Kluwer Acad. Publ. Group},
{Dordrecht},
{1996}.

\bibitem{Roitberg99}
{Ya. Roitberg},
\textit{Boundary Value Problems in the Spaces of Distributions},
{Math. Appl. (Springer), Vol.~498},
{Kluwer Acad. Publ. Group},
{Dordrecht},
{1999}.

\bibitem{Schechter60}
{M. Schechter},
\textit{Negative norms and boundary problems},
{Ann. of Math. (2)}
\textbf{72}
{(1960)},
{no.~3},
{581--593}.

\bibitem{Schechter63a}
{M. Schechter},
\textit{On $L_{p}$ estimates and regularity,~I},
{Amer. J. Math.}
\textbf{85}
{(1963)},
{no.~1},
{1--13}.

\bibitem{Schechter63b}
{M. Schechter}, \textit{On $L_{p}$ estimates and regularity,~II},
{Math. Scand.}
\textbf{13}
{(1963)},
{no.~1},
{47--69}.

\bibitem{Schechter64}
{M. Schechter},
\textit{On $L_{p}$ estimates and regularity,~III},
{Ric. Mat.}
\textbf{13}
{(1964)},
{192--206}.

\bibitem{Seneta76}
{E. Seneta},
\textit{Regularly Varying Functions},
{Lecture Notes in Math., vol.~508},
{Springer-Verlag},
{Berlin},
{1976}.

\bibitem{Sokolovskiy88}
{V. B. Sokolovskii},
\textit{On a generalization of the Neumann problem}
{(Russian)},
{Differ. Uravn.}
\textbf{24}
{(1988)},
{no.~4},
{714--716}.

\bibitem{Stepanets05}
{A. I. Stepanets},
\textit{Methods of Approximation Theory},
{VSP},
{Utrecht},
{2005}.

\bibitem{Triebel95}
{H.~Triebel},
\textit{Interpolation Theory, Function Spaces, Differential Operators} {[2-nd edn]},
{Johann Ambrosius Barth},
{Heidelberg},
{1995}.

\bibitem{Triebel01}
{H. Triebel},
\textit{The Structure of Functions},
{Monogr. Math., vol.~97},
{Birkh\"auser},
{Basel},
{2001}.

\bibitem{VainbergGrushin67b}
{B. R. Vainberg and V. V. Grushin},
\textit{Uniformly nonelliptic problems,~II}
{(Russian)},
{Mat. Sb.}
\textbf{73(115)}
{(1967)},
{no.~1},
{126--154}
{(English translation in: Sb. Math. \textbf{2} (1967), 111--133)}.


\bibitem{Venttsel59}
{A. D. Ventcel},
\textit{On boundary conditions for multi-dimensional diffusion processes},
{Theory Probab. Appl.}
\textbf{4}
{(1959)},
{164--177}.

\bibitem{VeshevKouzov77}
{V. A. Veshev and D. P. Kouzov},
\textit{Influence of the medium on the vibrations of plates joined at right angles},
Acoustical Physics
\textbf{23}
{(1977)},
{no.~3},
{206--211}.

\bibitem{VolevichPaneah65}
{L. R. Volevich and B. P. Paneah},
\textit{Certain spaces of generalized functions and embedding theorems} {(Russian)},
{Uspehi Mat. Nauk}
\textbf{20}
{(1965)},
{no.~1},
{3--74}
{(English translation in: Russian Math. Surveys \textbf{20} (1965), no.~1, 1--73)}.

\bibitem{Zinchenko17OpenMath}
{T. Zinchenko}; \textit{Elliptic operators on refined Sobolev scales on vector bundles},
{Open Math.}
\textbf{15}
{(2017)},
{907--925}.

\bibitem{ZinchenkoMurach12UMJ11}
{T. N. Zinchenko and A. A. Murach},
\textit{Douglis--Nirenberg elliptic systems in H\"ormander spaces}, {Ukrainian Math.~J.}
\textbf{64}
{(2013)},
{no.~11},
{1672--1687}.

\end{thebibliography}
\end{document}